\documentclass[preprint,12pt]{elsarticle}

\usepackage{amsthm}
\usepackage{amsmath,amssymb,txfonts}
\usepackage{color,bm,comment}
\usepackage{comment}
\usepackage[english]{babel}

\newtheorem{theorem}{Theorem}
\newtheorem{lemma}{Lemma}
\newtheorem{prop}{Proposition}

\newtheorem{remark}{Remark}

\theoremstyle{definition}

\def\re{\mathbb{R}}
\def\N{\mathbb{N}}

\def\({\left(}
\def\){\right)}
\def\[{\left[}
\def\]{\right]}
\def\pd{\partial}

\def\ep{\varepsilon}
\def\w{\omega}
\def\la{\lambda}

\def\tcr{\textcolor{red}}

\newtheorem{ThmA}{Theorem A}

\newtheorem{ThmB}{Theorem B}

\newtheorem{ThmD}{Theorem D}

\begin{document}

\begin{frontmatter}



\title{Critical Caffarelli-Kohn-Nirenberg inequalities\\ via nonlinear scaling argument}


\author[MS]{Megumi Sano\corref{Sano}\fnref{label1}}
\ead{sano@cc.nara-wu.ac.jp}
\fntext[label1]{Corresponding author.}

\author[YSa,YSb]{Yerkin Shaimerdenov}
\ead{yerkin.shaimerdenov@sdu.edu.kz}

\author[HS]{Hinako Sugimura}
\ead{sugimura.hinako@gmail.com}

\address[MS]{Department of Mathematics, Nara Women's University, 
Nara, 630-8506, Japan}
\address[YSa]{SDU University, 1/1 Abylai Khan Str., Kaskelen, Kazakhstan}
\address[YSb]{Institute of mathematics and mathematical modeling, 125 Pushkin Str., Almaty, Kazakhstan}
\address[HS]{Graduate School of Humanities and Sciences, Nara Women's University, Nara, 630-8506, Japan}

\begin{keyword}
Caffarelli-Kohn-Nirenberg inequality \sep logarithmic weight \sep nonlinear scaling

\MSC[2020] 26D10 \sep 46E35  
\end{keyword}

\date{\today}

\begin{abstract}
We show functional inequalities with logarithmic weights as a critical case of Caffarelli-Kohn-Nirenberg inequalities. Original CKN inequalities have polynomial weights and are shown via scaling argument. In this paper, we develop this argument into it for nonlinear scaling.  
Our inequalities are generalizations of critical Hardy type inequalities. 
\end{abstract}

\end{frontmatter}



%
%
\section{Introduction and main theorem}\label{S Intro}

Let $n \in \N$. In 1984, Caffarelli, Kohn and Nirenberg showed the following inequalities with polynomial weights. 
We give an equivalent form of their result.

\begin{ThmA}(\cite{CKN})
Let $0< a \le 1, p, q \ge 1, r>0$ and $\alpha, \beta, \gamma > -n$. 
Then there exists $C>0$ such that for any functions $u \in C_0^\infty (\re^n)$ the following inequality holds:
\begin{align}\label{CKN ineq}
    C \left( \int_{\re^n} |x|^\gamma |u(x)|^r \,dx \right)^{\frac{1}{r}}
    \le \left( \int_{\re^n} |x|^\alpha |\nabla u(x)|^p \,dx \right)^{\frac{a}{p}} \left( \int_{\re^n} |x|^\beta |u(x)|^q \,dx \right)^{\frac{1-a}{q}}
\end{align}
if and only if the following conditions hold.
\begin{align}\label{dilation valance}
    &\text{(Dilation balance at $0$)}\quad 
    \frac{\gamma +n}{r} = \frac{\alpha +n -p}{p} a \,+ \frac{\beta + n}{q}(1-a),\\
    \label{translation valance}
    &\text{(Translation balance)}\quad 
    \frac{1}{r}\ge \frac{a}{p} + \frac{1-a}{q} -\frac{a}{n},\\
    \label{valance at 0}
    &\text{(Balance at $0$)} \quad
    \frac{1}{r} \le \frac{a}{p}  + \frac{1-a}{q}
    \,\,\,\text{if} \,\,\, \frac{\alpha +n -p}{p}= \frac{\gamma + n}{r}=\frac{\beta + n}{q}
\end{align}
\end{ThmA}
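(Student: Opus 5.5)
Theorem A has two directions, and I treat them separately: necessity of (\ref{dilation valance})--(\ref{valance at 0}) through test-function families, and sufficiency through a reduction to radial Hardy-type inequalities combined with interpolation.

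\textbf{Necessity.} Each condition comes from a suitable family of test functions.

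For (\ref{dilation valance}), I fix $u\in C_0^\infty(\re^n\setminus\{0\})$ and apply the inequality to $u_\la(x)=u(\la x)$. The left side scales like $\la^{-(\gamma+n)/r}$. The right side scales like
\[
\la^{-a(\alpha+n-p)/p-(1-a)(\beta+n)/q}.
\]
Letting $\la\to0$ and $\la\to\infty$ forces the two exponents to be equal.

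For (\ref{translation valance}), I take $u(x)=\phi(\la(x-x_0))$ with $|x_0|=1$ and $\la\to\infty$. On the support all the weights are comparable to $1$, so the usual Gagliardo--Nirenberg scaling gives the inequality $\frac1r\ge \frac ap+\frac{1-a}q-\frac an$.

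For (\ref{valance at 0}), all three radial scalings coincide, so I pass to the variable $t=\log|x|$. Test functions are taken to be radial, equal to $1$ on a long interval of $t$ of length $L$, with ramps of width $1$ at both ends. In the regime of the equalities the weighted integrals become roughly $L$, $1$ and $L$ respectively. The inequality then reads $L^{1/r}\lesssim L^{(1-a)/q}$, with the ramp contributing $O(1)$. Letting $L\to\infty$ gives $\frac1r\le\frac{1-a}q$, which is stronger than (\ref{valance at 0}). I would therefore recheck this step against profiles that are spread out rather than plateau-shaped, since those may be what make the stated threshold $\frac ap+\frac{1-a}q$ the sharp one. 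In the paper's setting this necessity is merely quoted from \cite{CKN}, so I would follow their test functions here.

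\textbf{Sufficiency.} First I handle $a=1$ when the weights are at the Hardy scale. In polar coordinates, a one-dimensional weighted Hardy inequality $\int |x|^{\alpha-p}|u|^p\le C\int|x|^\alpha|\nabla u|^p$ holds whenever $\alpha+n-p\neq0$. Next I combine this with the weighted Sobolev inequality obtained from the Sobolev inequality on dyadic annuli $\{2^k<|x|<2^{k+1}\}$: there the weights are constant, so each annulus gives a local Gagliardo--Nirenberg inequality by scaling. Summing over $k$ uses $\ell^p\subset\ell^r$ when $r\ge p$, which is exactly where (\ref{valance at 0}) enters. For $0<a<1$ I interpolate between the $a=1$ endpoint and the $L^q$ term by H\"older, distributing the weight $|x|^\gamma=|x|^{\gamma_1\theta}|x|^{\beta(1-\theta)}$ according to (\ref{dilation valance}).

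\textbf{Main obstacle.} The hard step is the summation over annuli at the critical case where $\alpha+n-p=0$. There the global Hardy inequality fails, so I would use the local Poincar\'e-type control on each annulus and sum the pieces, relying on $r\ge p$ as indicated.
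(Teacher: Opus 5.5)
The paper only cites \cite{CKN} for Theorem A, but it reproduces both halves of the CKN argument: the test functions in the proof of Proposition \ref{Prop CKN ball}, and the scaling scheme of \S\ref{S a=1}--\S\ref{S a<1}. Your dilation and translation tests for (\ref{dilation valance}) and (\ref{translation valance}) are the standard ones and are fine. There are, however, two genuine gaps.

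\textbf{Necessity of (\ref{valance at 0}).} This computation is wrong. The conclusion $\frac1r\le\frac{1-a}{q}$ you derive is false: at $a=1$ it would rule out the classical Hardy inequality.

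\begin{itemize}
\item Put $\kappa=\frac{\alpha+n-p}{p}=\frac{\gamma+n}{r}=\frac{\beta+n}{q}$. Since $\gamma>-n$, you have $\kappa>0$, so the scale-invariant profile is $|x|^{-\kappa}$, not the constant $1$.
\item Write $u(x)=|x|^{-\kappa}v(\log|x|)$. The three integrals become $\omega_n\int|v'-\kappa v|^p\,dt$, $\omega_n\int|v|^q\,dt$ and $\omega_n\int|v|^r\,dt$.
\item The zeroth-order term $\kappa v$ makes the gradient integral of order $\kappa^pL$ for your plateau, not $O(1)$. If instead $u$ itself equals $1$ on the plateau, all integrals grow exponentially in $L$ and nothing follows.
\item Your ``$L$, $1$, $L$'' is exactly the computation for $\kappa=0$. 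That is the excluded critical case $(p-n,-n,-n)$ of Proposition \ref{Prop critical case}.
\end{itemize}

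The fix is the truncated profile $\min\{\epsilon^{-\kappa},|x|^{-\kappa}\}$, cut off near $|x|=1$, as in the third step of the proof of Proposition \ref{Prop CKN ball}. Then all three integrals are comparable to $\log\frac1\epsilon$, and you get exactly $\frac1r\le\frac ap+\frac{1-a}q$.

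\textbf{Sufficiency for $0<a<1$.} This does not follow from $a=1$ by H\"older. Your $a=1$ argument is fine: there $\gamma>-n$ forces $\alpha+n-p>0$, so the global Hardy inequality absorbs the lower-order terms coming from the annuli.

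The interpolation, however, needs an endpoint with $\frac1{r_1}=\frac1a\bigl(\frac1r-\frac{1-a}q\bigr)\in(0,\frac1p]$ and weight $\gamma_1=r_1\frac{\alpha+n-p}{p}-n>-n$. So it only covers $\alpha+n-p>0$ together with $\frac{1-a}q<\frac1r\le\frac ap+\frac{1-a}q$. It misses three cases:
\begin{itemize}
\item $\alpha+n-p\le 0$, where no admissible endpoint exists. The Nash inequality in $\mathbb{R}^2$ would need $\|u\|_\infty\lesssim\|\nabla u\|_2$. The uncertainty principle in $n=1,2$ would need Hardy with weight $|x|^{-2}$. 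Both are false.
\item $\frac1r\le\frac{1-a}q$, which can occur when $p\ge n$.
\item $\frac1r>\frac ap+\frac{1-a}q$, where $r_1<p$ and the $a=1$ inequality fails.
\end{itemize}
Nor can you handle the critical annuli by Poincar\'e alone: it requires zero mean and fails for functions nearly constant on a large ball.

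The missing idea, which the paper adapts from \cite{CKN}, is to work at the given $a$ directly.
\begin{enumerate}
\item Split $u=U+(u-U)$ with $U$ the spherical mean, and treat $U$ by a one-dimensional argument.
\item The part $u-U$ has zero mean on every annulus. Control it on one annulus by the mean-zero Gagliardo--Nirenberg inequality (Proposition \ref{Prop GN}), with H\"older or Morrey for other $r$ as in the proof of Lemma \ref{lem:mean zero}. This involves no lower-order term and no sign condition on $\alpha+n-p$.
\item Transport the estimate to every dyadic annulus via the dilation balance.
\item Sum using $\sum a_k^cb_k^d\le(\sum a_k)^c(\sum b_k)^d$ with $c+d=\frac{ra}{p}+\frac{r(1-a)}{q}\ge1$. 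This is exactly where $\frac1r\le\frac ap+\frac{1-a}q$ enters.
\end{enumerate}
The case $\frac1r>\frac ap+\frac{1-a}q$ then needs a separate argument. Normalize the two norms by dilation, and apply H\"older with parameters $b<a<d$ inside and outside the unit ball, as in \S\ref{S a<1 n>1 last} (cf.\ Remark \ref{Rem scale}).
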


The inequalities (\ref{CKN ineq}) include many inequalities, for example, the Hardy, the Sobolev and the H\'enon inequalities ($a=1$), the Nash inequality ($\alpha =\beta = \gamma =0, \,p=r=2, q=1$), the Gagliardo-Nirenberg inequality ($\alpha = \beta = \gamma =0$) and the Heisenberg uncertainty principle ($\alpha = \gamma =0, \,\beta =p=q=r=2$) and so on. There are various applications of these inequalities to PDEs, see e.g. \cite{Nash,CKN(PDE),BG,BV,SSW}, to name a few.  

In Theorem A, we assume $\alpha, \beta, \gamma >-n$ because the weights $|x|^\alpha, |x|^\beta, |x|^\gamma$ should belong to $L^1_{\rm loc}(\re^n)$. 
If we change the functional space $C_0^\infty (\re^n)$ to a smaller space $C_0^\infty(\re^n \setminus \{ 0\})$ in Theorem A, we can expect to drop these assumptions with respect to $\alpha, \beta, \gamma$. However, even if we consider $C_0^\infty(\re^n \setminus \{ 0\})$, a case where 
\begin{align}\label{critical case}
     (\alpha, \,\beta, \,\gamma ) = (p-n, \,-n, \,-n)
\end{align}
is still excluded as follows. Due to this, we call (\ref{critical case}) {\it the critical case} of CKN inequalities (\ref{CKN ineq}) in this paper. We will show Proposition \ref{Prop critical case} in \S \ref{S Prop}. 

\begin{prop}\label{Prop critical case}
    Let $n \ge 2$
    , $0< a \le 1, p, q \ge 1, r>0$ satisfy (\ref{translation valance}) in Theorem A. 
 Then
 \begin{align}\label{critical dame}
     \inf_{0 \not\equiv u \in C_0^\infty (\re^n \setminus \{ 0\})} \frac{\left( \int_{\re^n} |x|^{p-n} |\nabla u(x)|^p \,dx \right)^{\frac{a}{p}}\left( \int_{\re^n} |x|^{-n} |u(x)|^q \,dx \right)^{\frac{1-a}{q}}}{\left( \int_{\re^n} |x|^{-n} |u(x)|^r \,dx \right)^{\frac{1}{r}}} =0.
 \end{align}
\end{prop}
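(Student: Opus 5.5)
The plan is to use radial test functions written in the logarithmic variable $t=\log|x|$. In the critical case (\ref{critical case}) all three weights are invariant under the dilations $x\mapsto\lambda x$, so the quotient in (\ref{critical dame}) is invariant under ordinary scaling and cannot be pushed to $0$ that way. Instead I use a \emph{nonlinear} scaling: a linear dilation in the variable $t=\log|x|$.

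\textbf{Reduction to one dimension.} Fix a nonzero $g\in C_0^\infty(\re)$. For $R>0$ set
\[
u_R(x)=g\!\left(\frac{\log|x|}{R}\right).
\]
Then $u_R\in C_0^\infty(\re^n\setminus\{0\})$, since $g$ has compact support in $t$, so $u_R$ is supported in a closed annulus. Write $\omega_{n-1}=|S^{n-1}|$ and change variables to polar coordinates with $t=\log|x|$, so that $dx/|x|^n=\omega_{n-1}\,dt$ on radial functions. Using $|\nabla u_R(x)|=R^{-1}|g'(\log|x|/R)|\,|x|^{-1}$, one gets
\begin{align*}
\int_{\re^n}|x|^{p-n}|\nabla u_R|^p\,dx&=\omega_{n-1}R^{1-p}\int_\re|g'|^p\,ds,\\
\int_{\re^n}|x|^{-n}|u_R|^q\,dx&=\omega_{n-1}R\int_\re|g|^q\,ds,\\
\int_{\re^n}|x|^{-n}|u_R|^r\,dx&=\omega_{n-1}R\int_\re|g|^r\,ds .
\end{align*}
These follow from the substitution $t=Rs$.

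\textbf{Exponent count.} By the three formulas, the quotient in (\ref{critical dame}) evaluated at $u_R$ equals $C_g\,R^{e}$. Here $C_g\in(0,\infty)$ depends only on $g,n,a,p,q,r$, and
\[
e=\frac{(1-p)a}{p}+\frac{1-a}{q}-\frac1r=\Big(\frac ap+\frac{1-a}{q}-\frac1r\Big)-a .
\]
The translation balance (\ref{translation valance}) gives $\frac ap+\frac{1-a}{q}-\frac1r\le \frac an$. Hence
\[
e\le \frac an-a=-a\Big(1-\frac1n\Big)<0,
\]
because $a>0$ and $n\ge2$.

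\textbf{Conclusion.} Letting $R\to\infty$ gives $C_gR^e\to0$. The infimum in (\ref{critical dame}) is nonnegative, so it equals $0$.

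\textbf{Main obstacle.} The only real point is to see that ordinary dilations do nothing here. One must switch to stretching in $\log|x|$, which turns the problem into a one-dimensional Gagliardo–Nirenberg quotient on $\re$ with Lebesgue measure $dt$. In that setting "dimension $1$" is incompatible with the translation balance stated for $n\ge2$, and this mismatch is exactly what forces $e<0$. The remaining steps are routine: smoothness of $u_R$ away from $0$, and finiteness and positivity of $C_g$.
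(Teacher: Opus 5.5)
Your proof is correct and is essentially the paper's argument: the paper's test function $\phi_\ep$ is exactly $L\,g(\log|x|/L)$ with the tent profile $g(s)=(1-|s|)_+$ and $L=\log\frac{1}{\ep}$, where the extra factor $L$ is irrelevant because the quotient is $0$-homogeneous in $u$, and it yields the same exponent $\frac{a}{p}+\frac{1-a}{q}-\frac{1}{r}-a\le -a\bigl(1-\frac{1}{n}\bigr)$. Your smooth profile $g\in C_0^\infty(\re)$ simply removes the need for the mollification step the paper uses.
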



\noindent
The above fact (\ref{critical dame}) suggests that we consider another weights in the critical case (\ref{critical case}). 
In addition, even in the case where $a=1$, it is known that for any nonnegative weights $g \in L^1_{\rm loc}(\re^n)$ such that $\int_{B_1} g(x) \,dx >0$ and for any $r >0$ and $p>1$, 
we have
\begin{align}\label{whole space dame}
     \inf_{0 \not\equiv u \in C_0^\infty (\re^n )} \frac{\left( \int_{\re^n} |x|^{p-n} |\nabla u(x)|^p \,dx \right)^{\frac{1}{p}}}{\left( \int_{\re^n} g(x) \,|u(x)|^r \,dx \right)^{\frac{1}{r}}} =0,
 \end{align}
 see e.g. \cite[Remark 1.2 (2)]{HK} or \S \ref{S Prop}. The above fact (\ref{whole space dame}) suggests that we consider bounded domains, especially balls $B_R=\{ x \in \re^n\,\,|\,\, |x| <R\}$ in the critical case (\ref{critical case}). 
 Our main result is the critical case of CKN inequalities (\ref{CKN ineq}) as follows.

\begin{theorem}\label{Thm CCKN}
Let $0< a \le 1, p, q \ge 1, r>0, \alpha \in \re$ and $\beta, \gamma >1$. 
Then there exists $C>0$ such that for any functions $u \in C_0^\infty (B_R)$ the following inequality holds:
\begin{align}\label{CCKN ineq}
C \left( \int_{B_R} \frac{|u(x)|^r}{|x|^n (\log \frac{eR}{|x|})^\gamma} \,dx \right)^{\frac{1}{r}} \le \left( \int_{B_R} \frac{|x|^p \,|\nabla u(x)|^p}{|x|^n (\log \frac{eR}{|x|})^\alpha} \,dx \right)^{\frac{a}{p}} \left( \int_{B_R} \frac{|u(x)|^q}{|x|^n (\log \frac{eR}{|x|})^\beta} \,dx \right)^{\frac{1-a}{q}}
\end{align}
if and only if the following conditions hold.
\begin{align}\label{n-dilation valance}
    &\text{(Nonlinear scale balance at $0$)}\quad \frac{\gamma -1}{r} \ge \frac{\alpha -1 +p}{p} a \,+ \frac{\beta-1}{q}(1-a),\\
    \label{n-valance at mid}
    &\text{(Dilation balance at a midpoint)}\quad 
    \frac{1}{r}\ge \frac{a}{p} + \frac{1-a}{q} -\frac{a}{n},\\
    \label{n-valance at 0}
    &\text{(Balance at $0$)} \quad \frac{1}{r} \le \frac{a}{p}  + \frac{1-a}{q}\,\,\,\text{if} \,\,\, \frac{\alpha -1+p}{p}= \frac{\gamma -1}{r}=\frac{\beta -1}{q}
\end{align}
\end{theorem}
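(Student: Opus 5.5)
The plan is to treat the two directions separately. Necessity comes from three families of test functions adapted to the three conditions. Sufficiency comes from reducing to a one-dimensional inequality in the logarithmic variable and then invoking Theorem A.

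\emph{Necessity.} For (\ref{n-dilation valance}) I would use the nonlinear scaling adapted to the critical weights. Fix $0\not\equiv u\in C_0^\infty(B_R\setminus\{0\})$ and set $u_\lambda(x)=u\bigl(R(|x|/R)^{1/\lambda}\tfrac{x}{|x|}\bigr)$ with $\lambda\to\infty$. This transforms $\log\frac{R}{|x|}$ into $\lambda\log\frac{R}{|y|}$, and it leaves the measure $dx/|x|^n$ invariant up to the factor $\lambda$. Hence $\log\frac{eR}{|x|}\sim\lambda\log\frac{R}{|y|}$ on the support once $\lambda$ is large, and the $1$ in $\log\frac{eR}{|x|}=1+\log\frac{R}{|x|}$ becomes negligible. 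One also has $|x||\nabla u_\lambda(x)|\approx\lambda^{-1}|y||\nabla u(y)|$ for the radial part. A direct computation then shows that the three integrals behave like $\lambda^{1-\gamma}$, $\lambda^{1-\alpha-p}$ and $\lambda^{1-\beta}$ times fixed constants. Inserting these into (\ref{CCKN ineq}) and letting $\lambda\to\infty$ forces the exponent inequality (\ref{n-dilation valance}). Only an inequality appears, rather than the equality (\ref{dilation valance}), because $\lambda\to0$ is not admissible: the additive $1$ breaks exact homogeneity there.

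For (\ref{n-valance at mid}) I would concentrate $u$ near a point $x_0$ with $|x_0|=R/2$, taking $u_\ep(x)=\varphi((x-x_0)/\ep)$. All weights are then comparable to constants, so the usual translation/dilation computation from Theorem A yields the Gagliardo--Nirenberg balance. For (\ref{n-valance at 0}), in the equality case I would take radial functions $u(x)=f(\log\log\frac{eR}{|x|})$. These turn all three integrals into unweighted one-dimensional integrals in $s=\log\log\frac{eR}{|x|}$, and the equality of exponents is exactly what removes the weights. Choosing $f$ with long plateaus, as in the proof of (\ref{valance at 0}) in \cite{CKN}, then gives a contradiction when $\frac1r>\frac ap+\frac{1-a}{q}$.

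\emph{Sufficiency.} I would first reduce to the boundary case of (\ref{n-dilation valance}). The key tool is a critical Hardy inequality
\[
\int_{B_R}\frac{|u|^p}{|x|^n(\log\frac{eR}{|x|})^{\alpha+p}}\,dx\le C\int_{B_R}\frac{|x|^p|\nabla u|^p}{|x|^n(\log\frac{eR}{|x|})^{\alpha}}\,dx ,
\]
which I would prove by a 1D integration by parts in $t=\log\frac{eR}{|x|}\in(1,\infty)$, with no singularity at $t=1$ because $u$ vanishes near $|x|=R$. A strict inequality in (\ref{n-dilation valance}) can be handled by lowering $\gamma$, which only increases the left side since $\log\frac{eR}{|x|}\ge1$. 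This reduces matters to equality in (\ref{n-dilation valance}).

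In that equality case I would split $B_R$ into dyadic annuli in the variable $t$, namely $\{2^k\le t<2^{k+1}\}$. On each annulus the logarithmic weights are constant up to factors $2^{-k\cdot(\text{exponent})}$. After the change of variables $x=R e^{1-t}\omega$, each annulus becomes a fixed region, and there I apply the Gagliardo--Nirenberg/CKN inequality with bounded weights (Theorem A on an annulus, plus the Hardy term above to absorb lower-order terms). The exact balance makes the dyadic constants cancel. Summing over $k$ uses $\frac1r\le\frac ap+\frac{1-a}q$ to pass the sum inside the $\ell^r$ norm by H\"older and Jensen, and that condition holds whenever the (\ref{n-valance at 0}) case is relevant.

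The main obstacle is this summation step when (\ref{n-valance at 0}) is not required, i.e.\ when the exponents in (\ref{n-valance at 0}) do not coincide and $\frac1r$ may exceed $\frac ap+\frac{1-a}q$. There I would try to exploit the slack between the differing exponents to obtain a geometric factor $2^{-\delta k}$ in each annulus, which makes the sum converge.
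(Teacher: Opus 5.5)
Your necessity part follows the paper (nonlinear scaling, concentration near $|x_0|=R/2$, a $\log\log$ plateau), apart from two fixable slips. First, the scaling test needs a \emph{radial} $u$, because the angular part of $|x|\nabla u_\lambda$ is not multiplied by $\lambda^{-1}$. Second, $u=f(\log\log\frac{eR}{|x|})$ does not make the integrals unweighted: they carry $e^{-r\kappa s}$, $e^{-p\kappa s}$, $e^{-q\kappa s}$ and show no growth. You need $u=(\log\frac{eR}{|x|})^{\kappa}f(\log\log\frac{eR}{|x|})$, with $\kappa$ the common value in (\ref{n-valance at 0}); this is the paper's $u_\ep$.

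The sufficiency part has genuine gaps.

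(a) Lowering $\gamma$ to equality in (\ref{n-dilation valance}) is not a valid reduction. If $\frac{\alpha-1+p}{p}a+\frac{\beta-1}{q}(1-a)\le 0$ (e.g.\ $a=1$, $\alpha\le 1-p$), the new exponent is $\le 1$, and the reduced inequality is false (take $u\equiv 1$ near $0$). If $\frac1r>\frac ap+\frac{1-a}{q}$ and $\frac{\alpha-1+p}{p}=\frac{\beta-1}{q}<\frac{\gamma-1}{r}$, or if $a=1$ and $r<p$, you land in a configuration that the $\log\log$ test function rules out. The slack has to be kept. It can be used directly: it gives a factor $2^{-k\delta}$ on the $k$-th annulus, with $\delta=\frac{\gamma-1}{r}-\frac{\alpha-1+p}{p}a-\frac{\beta-1}{q}(1-a)$. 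Alternatively it can be traded against $r$ by H\"older, as in Theorem~\ref{Thm a=1 CCKN}(ii) and case (III) of \S\ref{S a<1 n>1 last}.

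(b) Absorbing the lower-order term of the annular GN inequality by the critical Hardy inequality requires $\alpha+p>1$ ($\tilde\gamma>1$ in Lemma~\ref{lem:Critical Hardy}). When $\alpha+p>1$, this is a legitimate shortcut around the paper's spherical decomposition. For $\alpha+p\le 1$ that Hardy inequality is false, yet the theorem covers such cases, e.g.\ $n=p=q=r=2$, $a=\frac12$, $\alpha=-1$, $\beta=3$, $\gamma=2$. Switching to an $L^q$ lower-order term does not help, since there it comes with the growing factor $2^{k/2}$. The paper avoids lower-order terms altogether by splitting $u=U+(u-U)$.
\begin{itemize}
\item The part $u-U$ is handled by the mean-zero Poincar\'e/GN inequality on $A_1$, transported by the nonlinear rescaling; under this rescaling the angular derivative only helps (\S\ref{S nonlinear}).
\item The radial part $U$ is handled one-dimensionally. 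For $a=1$ it uses the radial lemma plus the ($\log\log$-)Hardy inequality. For $a<1$ it uses the substitution $\log\frac{eR}{x}=(R/y)^A$ with $A(\alpha-1+p)+p>0$, which reduces to Proposition~\ref{Prop CKN ball}.
\end{itemize}

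(c) The case you leave open cannot be closed by a per-annulus factor $2^{-\delta k}$. That case is equality in (\ref{n-dilation valance}) with $\frac{\alpha-1+p}{p}\neq\frac{\beta-1}{q}$ and $\frac1r>\frac ap+\frac{1-a}{q}$. There the annular inequality carries no decaying factor under the nonlinear rescaling. Manufacturing one by switching to an interpolation parameter $b\neq a$ bounds the sum by $A(u)^bB(u)^{1-b}$, which has the wrong homogeneity. There is also no dilation invariance to normalize $A(u)=B(u)$ (Remark~\ref{Rem scale}).

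The missing idea is a $u$-dependent splitting scale. The paper chooses $\lambda$ with $\lambda^{\frac{\alpha-1+p}{p}-\frac{\beta-1}{q}}=B(u)/A(u)$; this is for $\frac{\alpha-1+p}{p}>\frac{\beta-1}{q}$, and the other sign is symmetric. It uses $b<a$ on $B_{e^{1-\lambda}}$ and $d>a$ on the rest. The two resulting powers of $\lambda$ then recombine exactly into $A(u)^aB(u)^{1-a}$, and the case $B(u)\le A(u)$ is handled directly. In your dyadic language, this is a threshold annulus $k_0$ determined by $B(u)/A(u)$, with $b$ used on one side and $d$ on the other.
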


\noindent
Note that the restrictions $\beta, \gamma >1$ in Theorem \ref{Thm CCKN} come from $L^1$-integrability of the logarithmic weights. 
The inequalities (\ref{CCKN ineq}) with $a=1$ become critical Hardy type inequalities with polynomial weights and logarithmic weights, which are studied in many papers \cite{Leray,AS,HK,MOW(Tohoku),II,S(JDE),H,S(MJM),Sano-TF(JGA)}. However, to the best of our knowledge, the inequalities (\ref{CCKN ineq}) with $a<1$ are new. 



\noindent
{\bf Outline of the paper}: The remainder of this paper is organized as follows. 

\noindent
{\it Section \ref{S Pre}}: In \S \ref{S Prop}, 
we will show Proposition \ref{Prop critical case} and (\ref{whole space dame}). Furthermore, we will derive critical CKN inequalities (\ref{CCKN ineq}) from original CKN inequalities (\ref{CKN ineq}) via a limiting procedure. These facts give us a reason why we consider logarithmic weights in the critical case. 
In \S \ref{S ball}, we will show Proposition \ref{Prop CKN ball}, which is concerning to CKN inequalities (\ref{CKN ineq}) on balls, which are used in \S \ref{S n=1}. In \S \ref{S nonlinear}, we will explain nonlinear scaling which is a key tool of this paper. Note that the critical CKN inequalities (\ref{CCKN ineq}) do not have scale invariance in general. The lack of scale invariance makes problems more difficult. For the details, see \S \ref{S nonlinear} and Remark \ref{Rem scale} in \S \ref{S a<1 n>1 last}. 

\noindent
{\it Section \ref{S Necessity}}: By using several test functions, we will show that if critical CKN inequalities (\ref{CCKN ineq}) hold, then the conditions (\ref{n-dilation valance})-(\ref{n-valance at 0}) in Theorem \ref{Thm CCKN} hold. 
From \S \ref{S a=1} to \S \ref{S a<1}, we will show the converse, that is, to show (\ref{CCKN ineq}) by using (\ref{n-dilation valance})-(\ref{n-valance at 0}). 

\noindent
{\it Section \ref{S a=1}}: 
We will show critical CKN inequalities (\ref{CCKN ineq}) with $a=1$ by using scaling argument by \cite{CKN}. Although the case where $a=1$ is already known by using nonlinear potential theory and generalized rearrangement technique, we give a proof because it is self-contained and is based on nonlinear scaling which is a simpler tool. For the details, see \S \ref{S a=1}. 

\noindent
{\it Section \ref{S n=1}}: We will show critical CKN inequalities (\ref{CCKN ineq}) with $n=1$ by using some transformation and Proposition \ref{Prop CKN ball}.

\noindent
{\it Section \ref{S a<1}}: We will complete the proof of Theorem \ref{Thm CCKN}. We will divide two cases. First, in \S \ref{S a<1 n>1}, we will consider the case where $a<1, n \ge 2$ and $\frac{1}{r} \le \frac{a}{p}+ \frac{1-a}{q}$. The strategy of the proof is the same as it in \S \ref{S a=1}. Next, in \S \ref{S a<1 n>1 last}, we will consider the case where $a<1, n \ge 2$ and $\frac{1}{r} > \frac{a}{p}+ \frac{1-a}{q}$. In this case, we will reduce it to \S \ref{S a<1 n>1}.

\noindent
{\it Section \ref{S App}}: For reader's convenience, we will give outline of a proof of Gagliardo-Nirenberg inequalities with mean zero, which are used in \S \ref{S a<1 n>1}.

\noindent
{\bf Notations}: Throughout this paper, if $u$ is a radial function that should be written as $u(x) = \tilde{u}(|x|)$ by some function $\tilde{u} = \tilde{u}(r)$, we write $u(x)= u(|x|)$ with admitting some ambiguity. Set 
$X_{\rm rad} = \{ u \in X \,\,|\,\, u(x) = u(|x|) \,\}$. 
Also, we use $C$ as a positive constant whose exact value is immaterial and may change from line to line. We use $\w_n$ as the surface area of the unit sphere $\mathbb{S}^{n-1}$ in $\re^n$. For unified notation, we define $\w_1 =2$.

%
%

\section{Preliminaries}\label{S Pre}

%
%

\subsection{Why do we consider logarithmic weights and balls in the critical case?}\label{S Prop}

First, we show Proposition \ref{Prop critical case} which gives us a reason why we consider the logarithmic weighted inequalities (\ref{CCKN ineq}) in the critical case (\ref{critical case}).  

\begin{proof}[Proof of Proposition \ref{Prop critical case}] 
We consider the following test function $\phi_\ep \in C_0(\re^n \setminus \{ 0\})$ for $\ep \in (0, 1)$.
\begin{align*}
    \phi_\ep (|x|) = \begin{cases}
        \log \frac{|x|}{\ep} \quad &\text{if} \quad  \ep \le |x| \le 1,\\
        \log \frac{1}{\ep |x|} \quad &\text{if} \quad  1 < |x| < \frac{1}{\ep},\\
        0 &\text{if}\quad  |x| < \ep \,\,{\rm or}\,\, |x| \ge \frac{1}{\ep}
    \end{cases}
\end{align*}
The function $\phi_\ep$ is not in $C_0^\infty (\re^n \setminus \{ 0\})$, but its mollification $\eta_\delta * \phi_\ep$ is in $C_0^\infty (\re^n \setminus \{ 0\})$ for small $\delta >0$, where 
\begin{align*}
\eta_\delta (|x|) =\delta^{-n} \,\eta \,(|x|/\delta) =
    \begin{cases}
        C \delta^{-n} \exp \( \frac{\delta^2}{|x|^2 -\delta^2} \) \quad &\text{if}\,\, |x| < \delta,\\
        0&\text{if}\,\, |x| \ge \delta. 
    \end{cases}
\end{align*}
Therefore, testing by $\eta_\delta * \phi_\ep$ instead of $\phi_\ep$ and taking a limit of each integration as $\delta \to 0$, we can verify that there is no problem to consider $\phi_\ep$ as a test function. 
Then we have
\begin{align*}
    \int_{\re^n} |x|^{p-n}|\nabla \phi_\ep|^p \,dx 
    &=\w_n \left[ \int_{\ep}^1 r^{p-1} \left| \frac{1}{r} \right|^p\,dr +  \int_1^{\frac{1}{\ep}} r^{p-1} \left| -\frac{1}{r} \right|^p\,dr \right]
    =2\w_n \log \frac{1}{\ep},\\
    \int_{\re^n} |x|^{-n}|\phi_\ep|^q \,dx 
    &=\frac{2\w_n}{q+1} \left( \log \frac{1}{\ep} \right)^{q+1},  \,\, \int_{\re^n} |x|^{-n}|\phi_\ep|^r \,dx 
    =\frac{2\w_n}{r+1} \left( \log \frac{1}{\ep} \right)^{r+1}, 
\end{align*}
which imply that 
\begin{align*}
    \frac{\left( \int_{\re^n} |x|^{p-n} |\nabla \phi_\ep|^p \,dx \right)^{\frac{a}{p}}\left( \int_{\re^n} |x|^{-n} |\phi_\ep|^q \,dx \right)^{\frac{1-a}{q}}}{\left( \int_{\re^n} |x|^{-n} |\phi_\ep|^r \,dx \right)^{\frac{1}{r}}}
    &\le C \left( \log \frac{1}{\ep} \right)^{\frac{a}{p} + \frac{1-a}{q}-\frac{1}{r}-a}\\
    &\le C \left( \log \frac{1}{\ep} \right)^{-a(1-\frac{1}{n})} \to 0 \,\,(\ep \to 0).
\end{align*}
\end{proof}


Next, we show (\ref{whole space dame}) to give a reason why we cannot consider $\re^n$ in Theorem \ref{Thm CCKN}.

\begin{proof}[Proof of (\ref{whole space dame})] 
We consider the following test function $\phi_\ep \in C_0(\re^n)$ for $\ep \in (0, 1)$. 
\begin{align*}
    \phi_\ep (|x|) = \begin{cases}
        1 \quad &\text{if} \quad  0 \le |x| \le 1,\\
        \left( \log \frac{1}{\ep} \right)^{-1} \log \frac{1}{\ep |x|} \quad &\text{if} \quad  1 < |x| < \frac{1}{\ep},\\
        0 &\text{if}\quad  |x| \ge \frac{1}{\ep}
    \end{cases}
\end{align*}
Then we have
\begin{align*}
    \int_{\re^n} |x|^{p-n}|\nabla \phi_\ep|^p \,dx 
    =\w_n \left( \log \frac{1}{\ep} \right)^{1-p},\,\,
    \int_{\re^n} g(x)|\phi_\ep|^r \,dx 
    \ge \int_{B_1} g(x) \,dx >0 
\end{align*}
which imply that 
\begin{align*}
    \frac{\left( \int_{\re^n} |x|^{p-n} |\nabla \phi_\ep|^p \,dx \right)^{\frac{1}{p}}}{\left( \int_{\re^n} g(x) |\phi_\ep|^r \,dx \right)^{\frac{1}{r}}}
    \le C \left( \log \frac{1}{\ep} \right)^\frac{1-p}{p} \to 0 \,\,(\ep \to 0).
\end{align*}
\end{proof}

Finally, for radial functions, we shall derive our critical CKN inequalities (\ref{CCKN ineq}) with $\alpha >1-p$ by taking a limit of original CKN inequalities (\ref{CKN ineq}) via harmonic transplantation. Concerning to taking a limit of functional inequalities via harmonic transplantation, see \cite[Section 3.3]{ST(HT)}. This also gives us a reason why we consider the logarithmic weights in the critical case (\ref{critical case}). 
Set 
\begin{align*}
    C_{\rm rad}(\alpha, \beta, \gamma)= \inf_{0\not\equiv u \in C_{0, {\rm rad}}^\infty (\re^n \setminus \{ 0\})} \frac{\left( \int_{\re^n} |x|^{\alpha} |\nabla u|^p \,dx \right)^{\frac{a}{p}} \left( \int_{\re^n} |x|^\beta |u|^q \,dx \right)^{\frac{1-a}{q}}}{\left( \int_{\re^n} |x|^\gamma |u|^r \,dx \right)^{\frac{1}{r}}}
\end{align*}
as the best constant of (\ref{CKN ineq}) for radial functions when the conditions \eqref{dilation valance}-\eqref{valance at 0} are satisfied. 
We observe that even if we change the domain from $\re^n$ to $B_1$, the best constant does not change thanks to the scale invariance structure, see e.g. \cite[Section 4.5]{Struwe}. Also, we observe that 
\begin{align}\label{C order}
    &C_{\rm rad}(p-n+\ep c_1, -n+ \ep c_2, -n+ \ep c_3) \notag \\
    &=\ep^{\frac{1}{r}+ \frac{p-1}{p}a - \frac{1-a}{q}} \,C_{\rm rad}(p-n + c_1, -n+c_2, -n+c_3)
\end{align}
where $c_1, c_2, c_3 >0$ are constants and $\ep >0$ is a parameter. 
Indeed, if we use a nonlinear scaling $u_\ep (|x|) = u(|x|^\ep)$ for radial function $u$, we have
\begin{align*}
     &\frac{\left( \int_{\re^n} |x|^{p-n+\ep c_1} |\nabla u_\ep|^p \,dx \right)^{\frac{a}{p}} \left( \int_{\re^n} |x|^{-n+\ep c_2} |u_\ep|^q \,dx \right)^{\frac{1-a}{q}}}{\left( \int_{\re^n} |x|^{-n+\ep c_3} |u_\ep|^r \,dx \right)^{\frac{1}{r}}}\\
     &= \ep^{\frac{1}{r}+ \frac{p-1}{p}a - \frac{1-a}{q}} \,\, \frac{\left( \int_{\re^n} |x|^{p-n+c_1} |\nabla u|^p \,dx \right)^{\frac{a}{p}} \left( \int_{\re^n} |x|^{-n+c_2} |u|^q \,dx \right)^{\frac{1-a}{q}}}{\left( \int_{\re^n} |x|^{-n+c_3} |u|^r \,dx \right)^{\frac{1}{r}}}
\end{align*}
which implies \eqref{C order}. 

Let $v \in C_0^{\infty} (B_1)$ be a radial function, $(a ,p, q, r, \alpha, \beta, \gamma)$ satisfy
\begin{align}\label{n-valance 2}
    \frac{\gamma -1}{r} = \frac{\alpha-1 +p}{p}a + \frac{\beta -1}{q}(1-a)
\end{align}
and \eqref{n-valance at mid}-\eqref{n-valance at 0} and $\alpha >1-p$. For parameter a $\ep >0$, we set
\begin{align*}
    \alpha_\ep = p-n + \ep \, (\alpha -1 + p), \,\,\beta_\ep = -n+\ep \,(\beta -1), \,\,\gamma_\ep = -n + \ep \, (\gamma -1)
\end{align*}
and 
\begin{align*}
    u(|x|) = v(|y|), \,\text{where}\,\, e^\ep |y|^{-\ep} -1 = (e^\ep -1) |x|^{-\ep}. 
\end{align*}
Thanks to \eqref{n-valance 2} and \eqref{n-valance at mid}-\eqref{n-valance at 0}, we see that $(a, p, q, r, \alpha_\ep, \beta_\ep, \gamma_\ep)$ satisfies \eqref{dilation valance}-\eqref{valance at 0} and $(\alpha_\ep, \beta_\ep, \gamma_\ep)$ goes to $(p-n, -n, -n)$ which is the critical case (\ref{critical case}) as $\ep \to 0$. 
Since 
$$1-\left( \frac{|y|}{e} \right)^\ep = \ep \log \frac{e}{|y|} + o(\ep) \quad (\ep \to 0),$$ we have
\begin{align*}
    0&<C_{\rm rad}(p-n + \alpha+p -1, -n+ \beta -1, -n + \gamma -1) 
    = \ep^{-\frac{1}{r} - \frac{p-1}{p}a + \frac{1-a}{q}} \,C_{\rm rad}(\alpha_\ep, \beta_\ep, \gamma_\ep) \\
    &\le \ep^{-\frac{1}{r} - \frac{p-1}{p}a + \frac{1-a}{q}} \,\frac{\left( \int_{B_1} |x|^{\alpha_\ep} |\nabla u|^p \,dx \right)^{\frac{a}{p}} \left( \int_{B_1} |x|^{\beta_\ep} |u|^q \,dx \right)^{\frac{1-a}{q}}}{\left( \int_{B_1} |x|^{\gamma_\ep} |u|^r \,dx \right)^{\frac{1}{r}}}\\
    &=  \frac{\left( \int_{B_1} \frac{|y|^{\alpha_\ep} |\nabla v|^p\,dy}{\left( 1-(|y|/e)^\ep \right)^\alpha} \right)^{\frac{a}{p}} \left( \int_{B_1} \frac{|y|^{\beta_\ep } |v|^q \,dy}{\left( 1-(|y|/e)^\ep \right)^\beta} \right)^{\frac{1-a}{q}}}{\ep^{\frac{\gamma}{r} - \frac{\alpha}{p}a - \frac{\beta}{q}(1-a)} \,\left( \int_{B_1} \frac{|y|^{\gamma_\ep} |v|^r\,dy}{\left( 1-(|y|/e)^\ep \right)^\gamma}  \right)^{\frac{1}{r}}}
    \to \frac{\left( \int_{B_1} \frac{|y|^{p} |\nabla v|^p\,dy}{|y|^n\left( \log \frac{e}{|y|} \right)^\alpha} \right)^{\frac{a}{p}} \left( \int_{B_1} \frac{|v|^q\,dy}{|y|^n \left(\log \frac{e}{|y|} \right)^\beta}  \right)^{\frac{1-a}{q}}}{\left( \int_{B_1} \frac{|v|^r\,dy}{|y|^n \left( \log \frac{e}{|y|} \right)^\gamma}  \right)^{\frac{1}{r}}}
\end{align*}
as $\ep \to 0$. Therefore, we get the critical CKN inequalities (\ref{CCKN ineq}) with $\alpha >1-p$ for radial function $v$ as a limit of the original CKN inequalities (\ref{CKN ineq}). 

\subsection{CKN inequalities on balls}\label{S ball}

In this subsection, we show CKN inequalities on balls by using Theorem A. Later, we will use these inequalities in \S \ref{S n=1}. 

\begin{prop}\label{Prop CKN ball}
Let $0< a \le 1, p, q \ge 1, r>0$ and $\alpha, \beta, \gamma > -n$. 
Then there exists $C>0$ such that for any function $u \in C_0^\infty (B_R)$ the following inequality holds:
\begin{align}\label{CKN ineq on B}
    C \left( \int_{B_R} |x|^\gamma |u(x)|^r \,dx \right)^{\frac{1}{r}}
    \le \left( \int_{B_R} |x|^\alpha |\nabla u(x)|^p \,dx \right)^{\frac{a}{p}} \left( \int_{B_R} |x|^\beta |u(x)|^q \,dx \right)^{\frac{1-a}{q}}
\end{align}
if and only if the following relations hold:
\begin{align}\label{dilation valance on B}
    &\text{(Dilation balance at $0$)}\quad 
    \frac{\gamma +n}{r} \ge \frac{\alpha +n-p}{p} a \,+ \frac{\beta +n}{q}(1-a),\\
    \label{dilation at mid on B}
    &\text{(Dilation balance at a midpoint)}\quad 
    \frac{1}{r}\ge \frac{a}{p} + \frac{1-a}{q} -\frac{a}{n},\\
    \label{valance at 0 on B}
    &\text{(Balance at $0$)} \quad 
    \frac{1}{r} \le \frac{a}{p}  + \frac{1-a}{q}
    \,\,\,\text{if} \,\,\, \frac{\alpha+n -p}{p}= \frac{\gamma +n}{r}=\frac{\beta+n}{q}.
\end{align}
\end{prop}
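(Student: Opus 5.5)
We prove sufficiency by reducing to Theorem A through a change of the weight exponents, and necessity with test functions.

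\emph{Sufficiency.} Assume (\ref{dilation valance on B})--(\ref{valance at 0 on B}), and first let $R=1$; the general case follows by the dilation $u(x)\mapsto u(Rx)$, which changes each integral by a power of $R$. If equality holds in (\ref{dilation valance on B}), then $(\alpha,\beta,\gamma)$ satisfies all the hypotheses of Theorem A. Since $u\in C_0^\infty(B_1)$ extends by zero to $C_0^\infty(\re^n)$, (\ref{CKN ineq on B}) follows at once.

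If the inequality in (\ref{dilation valance on B}) is strict, lower $\gamma$ to $\tilde\gamma<\gamma$ so that
\[
\frac{\tilde\gamma+n}{r}=\frac{\alpha+n-p}{p}a+\frac{\beta+n}{q}(1-a).
\]
On $B_1$ we have $|x|^\gamma\le |x|^{\tilde\gamma}$, so the left-hand side only increases and it suffices to prove the inequality with $\tilde\gamma$. We need $\tilde\gamma>-n$, that is, the right-hand side of the display must be positive. When $\alpha+n-p>0$ this is automatic, because $\beta>-n$.

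In the degenerate case the right-hand side may be $\le 0$, so we also raise $\alpha$. On $B_1$, replacing $\alpha$ by $\tilde\alpha\ge\alpha$ gives $|x|^{\tilde\alpha}\le|x|^\alpha$, which only decreases the right-hand side of (\ref{CKN ineq on B}) and so again reduces to the new weights. Choose $\tilde\alpha\ge\alpha$ and $\tilde\gamma\in(-n,\gamma]$ such that the dilation balance holds with equality. This is possible because the admissible range for the right-hand side runs from the original value up to $+\infty$ as $\tilde\alpha$ increases, while $(\gamma+n)/r$ is strictly larger than the original value. If needed we raise $\beta$ in the same way.

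We must also make sure the pair does not fall into the exceptional case of (\ref{valance at 0}) when $\frac{1}{r}>\frac{a}{p}+\frac{1-a}{q}$. The one-parameter freedom lets us break the equality $\frac{\tilde\alpha+n-p}{p}=\frac{\tilde\gamma+n}{r}=\frac{\beta+n}{q}$, for instance by moving $\tilde\alpha$ slightly and compensating with $\tilde\gamma$. This bookkeeping is the main technical obstacle. It works because strict inequality in (\ref{dilation valance on B}) gives an open set of admissible choices. The translation condition (\ref{translation valance}) is exactly (\ref{dilation at mid on B}), so Theorem A applies.

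\emph{Necessity.} Assume (\ref{CKN ineq on B}) holds.
\begin{itemize}
\item For (\ref{dilation valance on B}), test with $u_\lambda(x)=u(x/\lambda)$, $\lambda\to0$, where $u\in C_0^\infty(B_1)$. The two sides scale like $\lambda^{(\gamma+n)/r}$ and $\lambda^{\frac{\alpha+n-p}{p}a+\frac{\beta+n}{q}(1-a)}$ respectively, so letting $\lambda\to 0$ forces the stated inequality.
\item For (\ref{dilation at mid on B}), concentrate a bump at the point $x_0$ with $|x_0|=1/2$, that is, take $u(x_0+(x-x_0)/\lambda)$. The weights are essentially constant near $x_0$, and $\lambda\to0$ gives the unweighted exponent comparison.
\item For (\ref{valance at 0 on B}) in the equality case, adapt the log-type test functions of Proposition \ref{Prop critical case} to $B_1$. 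Take $|x|^{-s}$ times a logarithmic profile supported in $\ep<|x|<1/2$, with $s$ matching the common value. Exactly as in that proof, the quotient then behaves like $(\log\frac1\ep)^{\frac{a}{p}+\frac{1-a}{q}-\frac1r}$, which tends to $0$ unless $\frac1r\le\frac ap+\frac{1-a}{q}$.
\end{itemize}
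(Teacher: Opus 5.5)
Your proposal is correct and follows essentially the same route as the paper. For necessity, you dilate toward the origin, concentrate a bump at a point with $|x_0|=R/2$, and in the equal-exponent case use a $|x|^{-(\gamma+n)/r}$-type profile with logarithmic growth. For sufficiency, you shift weight exponents monotonically on the ball until the dilation balance is an equality and then invoke Theorem A. The only real difference is that you lower $\gamma$, and raise $\alpha$ when that is needed. The paper instead keeps $\gamma$ fixed and raises $\alpha$, or raises $\beta$ when $\frac{\beta+n}{q}=\frac{\gamma+n}{r}$. That choice keeps every exponent above $-n$ automatically, so the paper avoids the exceptional configuration with a simple two-case split rather than your perturbation argument.
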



\begin{proof}[Proof of Proposition \ref{Prop CKN ball}]
We assume that the inequalities (\ref{CKN ineq on B}) hold for any $u \in C_0^\infty (B_R)$. Then we shall show (\ref{dilation valance on B})-(\ref{valance at 0 on B}). First, we consider a function $u \in C_0^\infty (B_R) \setminus \{ 0 \}$ and its scaled function $u_\la \in C_0^\infty \left(B_{R/\la} \right) \subset C_0^\infty (B_R)$ which is given by
\begin{align*}
    u_\la (x) = u(y),\,\,\text{where}\,\, 
    y= \la x\,\, \text{and} \,\, \la \in [1, \infty).
\end{align*}
Since the inequalities (\ref{CKN ineq on B}) hold for any $u_\la \in C_0^\infty (B_R)$, we have 
\begin{align*}
    0< C &\le \frac{\left( \int_{B_R} |x|^{\alpha} |\nabla u_\la|^p \,dx \right)^{\frac{a}{p}} \left( \int_{B_R} |x|^\beta |u_\la|^q \,dx \right)^{\frac{1-a}{q}}}{\left( \int_{B_R} |x|^\gamma |u_\la|^r \,dx \right)^{\frac{1}{r}}}\\
    &= \la^{\frac{n+\gamma}{r} - \left\{ \frac{n+\alpha-p}{p} a + \frac{n+\beta}{q} (1-a) \right\}} \,\frac{\left( \int_{B_R} |x|^{\alpha} |\nabla u|^p \,dx \right)^{\frac{a}{p}} \left( \int_{B_R} |x|^\beta |u|^q \,dx \right)^{\frac{1-a}{q}}}{\left( \int_{B_R} |x|^\gamma |u|^r \,dx \right)^{\frac{1}{r}}}
\end{align*}
for any $\la \in [1, \infty)$,  
where $C>0$ is independent of $\la$. If relation (\ref{dilation valance on B}) does not hold, then we can derive a contradiction by taking a limit as $\la \to \infty$ in the above inequality. Therefore, we get (\ref{dilation valance on B}). 


Second, we consider $u \in C_0^\infty (B_{R/4}) \setminus \{ 0\}$ and $x_0 \in \pd B_{R/2}$. Define its scaled function $u_\la \in C_0^\infty \left( B_{\frac{R}{4\la}} (x_0) \right) \subset C_0^\infty (B_R)$ as follows.
\begin{align}\label{test mid}
    u_\la (x) = u(y),\,\,\text{where}\,\, 
   y=\la \,(x-x_0)\,\,\text{and}\,\, \la \in [1, \infty)
\end{align}
Since supp $u_\la \subset B_{\frac{3R}{4}} \setminus B_{\frac{R}{4}}=:A$, we have
\begin{align*}
    \int_{B_R} |x|^\gamma |u_\la(|x|)|^r \,dx 
    &\ge \left( \min_{x \in A}|x|^\gamma \right) \int_{A} |u_\la (x)|^r\,dx = C \la^{-n}, \\
    \int_{B_R} |x|^\alpha |\nabla u_\la (|x|)|^p \,dx 
    &\le \left( \max_{x \in A}|x|^\alpha \right) \int_{A} |\nabla u_\la (x)|^p\,dx = C \la^{p-n},\\
    \int_{B_R} |x|^\beta |u_\la (|x|)|^q \,dx
    &\le \left( \max_{x \in A}|x|^\beta \right) \int_{A} |u_\la (x)|^q\,dx = C \la^{-n}.
\end{align*}
Since the inequalities (\ref{CKN ineq on B}) hold for any $u_\la \in C_0^\infty (B_R)$, we have the inequality
$$C (\la^{-n})^{\frac{1}{r}} \le (\la^{p-n})^{\frac{a}{p}}\,(\la^{-n})^{\frac{1-a}{q}}$$
for any $\la \in [1, \infty)$, where $C>0$ is independent of $\la$. 
Therefore, we get (\ref{dilation at mid on B}). 


Third, we consider the radial function $u_\ep \in C_0(B_{R})$ for $\ep \in (0, R/4)$ as follows.
\begin{align*}
    u_\ep (|x|) =\begin{cases}
        \ep^{-\frac{n+\gamma}{r}} \quad &\text{if} \,\,\, 0\le |x| \le \ep,\\
        \left| x \right|^{-\frac{n+\gamma}{r}} \quad &\text{if} \,\,\, \ep < |x| < \frac{R}{4},\\
        \left( \frac{R}{4} \right)^{-\frac{n+\gamma}{r}-1} \left( \frac{R}{2} -|x| \right) \quad &\text{if} \,\, \frac{R}{4} \le |x| < \frac{R}{2},\\
        0 \quad &\text{if} \,\, \frac{R}{2} \le |x| < R.
    \end{cases}
\end{align*}
Then we have
\begin{align*}
    \int_{B_R} |x|^\gamma |u_\ep (|x|) |^r \,dx 
    &\ge \int_{B_{\frac{R}{4}} \setminus B_\ep} |x|^\gamma |u_\ep (|x|) |^r \,dx 
    = \w_n \log \frac{R}{4\ep},\\
    \int_{B_R} |x|^\alpha |\nabla u_\ep (|x|) |^p\,dx 
    &= \left( \frac{\gamma +n}{r} \right)^p \int_{B_{\frac{R}{4}} \setminus B_\ep} |x|^{\alpha-\frac{\gamma+n}{r}p-p} \,dx + C \\
    &=\left( \frac{\gamma +n}{r} \right)^p \w_n \int_\ep^{\frac{R}{4}} t^{-1 + p A} \,dt+C,\\
    \int_{B_R} |x|^\beta |u_\ep (|x|) |^q \,dx 
    &=\int_{B_\ep} |x|^\beta |u_\ep (|x|) |^q \,dx +  \int_{B_{\frac{R}{4}} \setminus B_\ep} |x|^\beta |u_\ep (|x|) |^q \,dx +C\\
    &= \frac{\w_n}{\beta+n} \,\ep^{\,qB}+ \w_n \int_\ep^{ \frac{R}{4}} t^{-1 + q B} \,dt+C,
\end{align*}
where $A=\frac{\alpha+n -p}{p}-\frac{\gamma+n}{r}$ and $B=\frac{\beta+n}{q}-\frac{\gamma+n}{r}$.
Since the inequalities (\ref{CKN ineq on B}) hold for any $u_\ep$, we have the inequality
\begin{align}\label{growth of ep on B}
C \left( \log \frac{R}{4\ep} \right)^{\frac{1}{r}} 
\le \left( \int_\ep^{\frac{R}{4}} t^{-1 + p A} \,dt \right)^{\frac{a}{p}}\,\left\{  \ep^{\,qB}+  \int_\ep^{ \frac{R}{4}} t^{-1 + q B} \,dt \right\}^{\frac{1-a}{q}}
\end{align}
for any small $\ep$, where $C>0$ is independent of $\ep$. 
Note that by (\ref{dilation valance on B}), we see that
\begin{align*}
    A &=\frac{\alpha+n -p}{p} - \frac{\gamma+n}{r} \le (1-a) \,\left(  \frac{\alpha+n -p}{p} - \frac{\beta+n}{q} \right),\\
    B &= \frac{\beta+n}{q} - \frac{\gamma+n}{r} \le a \,\left( \frac{\beta+n}{q} - \frac{\alpha+n -p}{p} \right).
\end{align*}
Thus, if we consider the case $A=B=0$, that is $\frac{\gamma+n}{r} = \frac{\alpha+n -p}{p}= \frac{\beta+n}{q}$, then (\ref{growth of ep on B}) becomes
\begin{align*}
C \left( \log \frac{R}{4\ep} \right)^{\frac{1}{r}} 
\le \left( \log \frac{R}{4 \ep} \right)^{\frac{a}{p} + \frac{1-a}{q}}.
\end{align*}
This implies that $\frac{1}{r} \le \frac{a}{p}+ \frac{1-a}{q}$ which is (\ref{valance at 0 on B}). 
In other cases, we do not get any restrictions from (\ref{growth of ep on B}).  
Consequently, we get (\ref{valance at 0 on B}). 


Conversely, we assume the relations (\ref{dilation valance on B})-(\ref{valance at 0 on B}). If $\frac{\gamma+n}{r}  = \frac{\alpha+n -p}{p} a + \frac{\beta+n}{q} (1-a)$, then (\ref{CKN ineq}) implies (\ref{CKN ineq on B}) directly because of $C_0^\infty (B_R) \subset C_0^\infty (\re^n)$. Thus we show the inequalities (\ref{CKN ineq on B}) for any $u \in C_0^\infty (B_R)$ when 
$\frac{\gamma+n}{r}  > \frac{\alpha+n -p}{p} a + \frac{\beta+n}{q} (1-a)$. Note that there is no upper restrictions of $\frac{1}{r}$ in \eqref{valance at 0 on B}, because $\frac{\alpha+n -p}{p} = \frac{\beta+n}{q}=\frac{\gamma +n}{r}$ is not satisfied. 
If $\frac{\beta+n}{q}\not= \frac{\gamma+n}{r}$, then we set $\tilde{\alpha} > \alpha$ as follows. 
$$
\frac{\tilde{\alpha}+n -p}{p} a + \frac{\beta+n}{q} (1-a):=\frac{\gamma+n}{r} > \frac{\alpha+n -p}{p} a + \frac{\beta+n}{q} (1-a) 
$$
By using the inequalities (\ref{CKN ineq}) in Theorem A for exponents $(\tilde{\alpha}, \beta, \gamma)$, we have
\begin{align*}
    \left( \int_{B_R} |x|^\gamma |u|^r \,dx \right)^{\frac{1}{r}} 
    &\le C \left( \int_{B_R} |x|^{\tilde{\alpha}} |\nabla u(x)|^p \,dx \right)^{\frac{a}{p}} \left( \int_{B_R} |x|^\beta |u(x)|^q \,dx \right)^{\frac{1-a}{q}}\\
    &\le C \left( \int_{B_R} |x|^\alpha |\nabla u(x)|^p \,dx \right)^{\frac{a}{p}} \left( \int_{B_R} |x|^\beta |u(x)|^q \,dx \right)^{\frac{1-a}{q}}.
\end{align*}
On the other hand, if $\frac{\beta+n}{q} = \frac{\gamma+n}{r}$, then we set $\tilde{\beta} > \beta$ as follows. 
$$
\frac{\alpha+n -p}{p} a + \frac{\tilde{\beta}+n}{q} (1-a):=\frac{\gamma+n}{r}  > \frac{\alpha+n -p}{p} a + \frac{\beta+n}{q} (1-a) 
$$
Note that $\frac{\tilde{\beta}+n}{q}\not= \frac{\gamma+n}{r}$ which implies that $\frac{\alpha+n -p}{p} = \frac{\tilde{\beta}+n}{q}=\frac{\gamma +n}{r}$ is not satisfied. By using the inequalities (\ref{CKN ineq}) for exponents $(\alpha, \tilde{\beta}, \gamma)$, we have
\begin{align*}
    \left( \int_{B_R} |x|^\gamma |u|^r \,dx \right)^{\frac{1}{r}} 
    &\le C \left( \int_{B_R} |x|^{\alpha} |\nabla u(x)|^p \,dx \right)^{\frac{a}{p}} \left( \int_{B_R} |x|^{\tilde{\beta}} |u(x)|^q \,dx \right)^{\frac{1-a}{q}}\\
    &\le C \left( \int_{B_R} |x|^\alpha |\nabla u(x)|^p \,dx \right)^{\frac{a}{p}} \left( \int_{B_R} |x|^\beta |u(x)|^q \,dx \right)^{\frac{1-a}{q}}.
\end{align*}
Therefore, we obtain Proposition \ref{Prop CKN ball}.
\end{proof}

\subsection{Nonlinear scaling}\label{S nonlinear}

A big difference between linear scaling (dilation) $x \mapsto \la x$ \,and the nonlinear scaling $x \mapsto  |x|^{\la-1} x$\, is the lack of scale invariance of integral of derivative term in (\ref{CCKN ineq}). In fact, if we consider the following scaled function $u_\la \in C_0(B_1) \cap C^\infty (B_1 \setminus \{ 0\})$ for $u \in C_0^\infty (B_1)$ and $\la \in (0, 1]$:
\begin{align*}
    u_\la (x) =\begin{cases}
    &f(\la) \,u(e^{1-\la}|x|^{\la -1}x), \quad \(x \in B_{e^{1-1/\la}} \),\\
    &0, \quad \( x \in B_1  \setminus B_{e^{1-1/\la}}\),
    \end{cases}
\end{align*}
then we have 
\begin{align*}
    &\int_{B_1} \frac{|x|^p \,|\nabla u_\la (x)|^p}{|x|^n (\log \frac{e}{|x|})^\alpha} \,dx \\
    &= \int_0^{1}\int_{\mathbb{S}^{n-1}} \left( \,\left| \frac{\partial u_\la (r\w)}{\partial r} \right|^2 + \frac{| \nabla_{\mathbb{S}^{n-1}} u_\la (r\w )|^2}{r^2} \,\right)^{\frac{p}{2}} r^{p-1} \left( \log \frac{e}{r} \right)^{-\alpha}\,drdS_\w  \\
    &= f(\la)^p \iint \left( \,\left| \frac{\partial u (s\w)}{\partial s} \right|^2 + \left(\frac{ds}{dr} r\right)^{-2} | \nabla_{\mathbb{S}^{n-1}} u (s\w )|^2 \,\right)^{\frac{p}{2}} \left( \frac{ds}{dr} r \right)^{p-1} \left( \frac{1}{\la} \log \frac{e}{s} \right)^{-\alpha}\,dsdS_\w  \\
    &= f(\la)^p \la^{\alpha -1+p}\iint \left( \,\left| \frac{\partial u (s\w)}{\partial s} \right|^2 +  \frac{| \nabla_{\mathbb{S}^{n-1}} u (s\w )|^2}{\la^2 s^2} \,\right)^{\frac{p}{2}} s^{p-1} \left( \log \frac{e}{s} \right)^{-\alpha}\,dsdS_\w  \\
    &\ge f(\la)^p \la^{\alpha -1+p} \int_{B_1} \frac{|y|^p \,|\nabla u(y)|^p}{|y|^n (\log \frac{e}{|y|})^\alpha} \,dy.
\end{align*}
Although the derivative term has scale invariance when $u$ is a radial function and $f(\la) = \la^{-\frac{\alpha-1+p}{p}}$, there is no scale invariance when $u$ is a non-radial function. 

Scale invariance plays an important role to study functional inequalities and PDEs. We emphasize that our inequalities (\ref{CCKN ineq}) do not have scale invariance in general, but scaling argument by \cite{CKN} still works even for the nonlinear scaling. For the details, see the proof of Lemma \ref{lem:mean zero a=1} in [Step2] in \S \ref{S a=1}, \S \ref{S a<1 n>1} and \S \ref{S a<1 n>1 last}. 

%
%
\section{Necessity part of Theorem \ref{Thm CCKN}}\label{S Necessity}

In this section, we show that if the inequality (\ref{CCKN ineq}) holds for any $u \in C_0^\infty (B_R)$, then conditions (\ref{n-dilation valance})-(\ref{n-valance at 0}) hold. First, we consider a radial function $u \in C_0^\infty (B_R \setminus \{ 0 \})$ and its nonlinear scaled function 
$$u_\la \in C^\infty \left(B_{e^{1-\frac{1}{\la}} R} \setminus \{ 0\} \right) \cap C_0 \left(B_{e^{1-\frac{1}{\la}} R} \right) \subset C_0 (B_R)$$ which is given by
\begin{align*}
    u_\la (|x|) = u(|y|),\,\,\text{where}\,\, 
    \frac{|y|}{eR} = \left(\frac{|x|}{eR}\right)^\la\,\,\text{and}\,\, \la \in (0, 1].
\end{align*}
In the same way as it in \S \ref{S nonlinear}, we have
\begin{align*}
    \int_{B_R} \frac{|u_\la(|x|)|^r}{|x|^n (\log \frac{eR}{|x|})^\gamma} \,dx 
    &= \la^{\gamma -1} \int_{B_R} \frac{|u(|y|)|^r}{|y|^n (\log \frac{eR}{|y|})^\gamma} \,dy,\\
    \int_{B_R} \frac{|x|^p \,|\nabla u_\la (|x|)|^p}{|x|^n (\log \frac{eR}{|x|})^\alpha} \,dx 
    &=\la^{\alpha-1+p} \int_{B_R} \frac{|y|^p \,|\nabla u(|y|)|^p}{|y|^n (\log \frac{eR}{|y|})^\alpha} \,dy,\\
    \int_{B_R} \frac{|u_\la (|x|)|^q}{|x|^n (\log \frac{eR}{|x|})^\beta} \,dx
    &=\la^{\beta -1} \int_{B_R} \frac{|u(|y|)|^q}{|y|^n (\log \frac{eR}{|y|})^\beta} \,dy.
\end{align*}
Since the inequalities (\ref{CCKN ineq}) hold for any $u_\la$, we have the inequality
$$C (\la^{\gamma -1})^{\frac{1}{r}} \le (\la^{p-1+\alpha})^{\frac{a}{p}}\,(\la^{\beta -1})^{\frac{1-a}{q}}$$
for any $\la \in (0, 1]$, where $C>0$ is independent of $\la$. Therefore, we get (\ref{n-dilation valance}).

Second, we consider the same test function $u_\la \in C_0^\infty (B_R)$ which is given by (\ref{test mid}). In a similar way to \S \ref{S ball}, we get (\ref{n-valance at mid}).

Third, we consider the radial function $u_\ep \in C_0(B_{R/2})$ for $\ep \in (0, R/e)$ as follows.
\begin{align*}
    u_\ep (|x|) =\begin{cases}
        \left( \log \frac{eR}{\ep} \right)^{\frac{\gamma -1}{r}} \quad &\text{if} \,\,\, 0\le |x| \le \ep,\\
        \left( \log \frac{eR}{|x|} \right)^{\frac{\gamma -1}{r}} \quad &\text{if} \,\,\, \ep < |x| < \frac{R}{e},\\
        \left( R - 2|x| \right) \,2^{\frac{\gamma-1}{2}} \frac{e}{R(e-2)} \quad &\text{if} \,\, \frac{R}{e} \le |x| < \frac{R}{2}.\\
    \end{cases}
\end{align*}
Then we have
\begin{align*}
    \int_{B_R} \frac{|u_\ep(|x|)|^r}{|x|^n (\log \frac{eR}{|x|})^\gamma} \,dx 
    &\ge \int_{B_{\frac{R}{e}} \setminus B_\ep} \frac{|u_\ep(|x|)|^r}{|x|^n (\log \frac{eR}{|x|})^\gamma} \,dx 
    = \w_n \left( \log \log \frac{eR}{\ep} - \log 2 \right),\\
    \int_{B_R} \frac{|x|^p \,|\nabla u_\ep (|x|)|^p}{|x|^n (\log \frac{eR}{|x|})^\alpha} \,dx 
    &= \left( \frac{\gamma -1}{r} \right)^p \int_{B_{\frac{R}{e}} \setminus B_\ep}\frac{(\log \frac{eR}{|x|})^{\frac{\gamma -1}{r}p -p}}{|x|^n (\log \frac{eR}{|x|})^\alpha} \,dx + C \\
    &=\left( \frac{\gamma -1}{r} \right)^p \w_n \int_2^{\log \frac{eR}{\ep}} t^{-1 + p A} \,dt+C,\\
    \int_{B_R} \frac{|u_\ep(|x|)|^q}{|x|^n (\log \frac{eR}{|x|})^\beta} \,dx 
    &=\int_{B_\ep} \frac{|u_\ep(|x|)|^q}{|x|^n (\log \frac{eR}{|x|})^\beta} \,dx +  \int_{B_{\frac{R}{e}} \setminus B_\ep} \frac{|u_\ep(|x|)|^q}{|x|^n (\log \frac{eR}{|x|})^\beta} \,dx +C\\
    &= \frac{\w_n}{\beta -1} \left( \log \frac{eR}{\ep} \right)^{\frac{\gamma -1}{r}q + 1-\beta}+ \w_n \int_2^{\log \frac{eR}{\ep}} t^{-1 + q B} \,dt+C,
\end{align*}
where $A=\frac{\gamma -1}{r} - \frac{\alpha -1+p}{p}$ and $B= \frac{\gamma -1}{r} - \frac{\beta-1}{q}$. 
Since the inequality (\ref{CCKN ineq}) holds for any $u_\ep$, we have the inequality
\begin{align}\label{growth of ep}
C \left( \log \log \frac{eR}{\ep} \right)^{\frac{1}{r}} 
\le \left( \int_2^{\log \frac{eR}{\ep}} t^{-1 + p A} \,dt \right)^{\frac{a}{p}}\,\left\{  \left( \log \frac{eR}{\ep} \right)^{qB} + \int_2^{\log \frac{eR}{\ep}} t^{-1 +qB} \,dt \right\}^{\frac{1-a}{q}}
\end{align}
for any $\ep \in (0, R/e)$, where $C>0$ is independent of $\ep$. 
Note that by (\ref{n-dilation valance}), we see that
\begin{align*}
    A &=\frac{\gamma -1}{r} - \frac{\alpha -1+p}{p} \ge (1-a) \,\left(  \frac{\beta -1}{q} - \frac{\alpha -1+p}{p} \right),\\
    B &= \frac{\gamma -1}{r} - \frac{\beta-1}{q} \ge a \,\left( \frac{\alpha -1+p}{p} - \frac{\beta -1}{q} \right).
\end{align*}
Thus, if we consider the case $A=B=0$, that is $\frac{\gamma -1}{r} = \frac{\alpha -1+p}{p}= \frac{\beta -1}{q}$, then (\ref{growth of ep}) becomes
\begin{align*}
C \left( \log \log \frac{eR}{\ep} \right)^{\frac{1}{r}} 
\le \left( \log \log \frac{eR}{\ep} \right)^{\frac{a}{p} + \frac{1-a}{q}}.
\end{align*}
This implies $\frac{1}{r} \le \frac{a}{p}+ \frac{1-a}{q}$ which is (\ref{n-valance at 0}). 
In other cases, we do not get any restrictions from (\ref{growth of ep}).  
Consequently, we get (\ref{n-valance at 0}). 
Therefore \eqref{n-dilation valance}-\eqref{n-valance at 0} are necessary conditions of (\ref{CCKN ineq}).  

%
%
\section{Sufficiency part of Theorem \ref{Thm CCKN} when $a=1$ and $n \ge 2$}\label{S a=1}

In this section, we show critical CKN inequalities (\ref{CCKN ineq}) with $a=1, n \ge 2$ and explain key ideas to show (\ref{CCKN ineq}), especially {\it nonlinear scaling argument}, 
which is a development of scaling argument by Caffarelli-Kohn-Nirenberg \cite{CKN}. Original scaling argument for Theorem A is a way to create polynomial weighted inequalities (\ref{CKN ineq}) from non-weighted inequalities (GN inequalities). In this paper, we develop this argument into it for nonlinear scaling and create logarithmic weighted inequalities (\ref{CCKN ineq}) from GN inequalities. For the details, see the proof of Lemma \ref{lem:mean zero a=1}. 
Note that sufficiency part of Theorem \ref{Thm CCKN} with $a=1, n \ge 2$ becomes the following.

\begin{theorem}\label{Thm a=1 CCKN}
Let $n \ge 2, p \ge 1, r>0, \alpha \in \re$ and $\gamma >1$ satisfy either {\rm (i)} or {\rm (ii):} 
\begin{align*}
  &{\rm (i)}\,\, p \le r \, 
   \begin{cases}
   \le \frac{np}{n-p} \,\, &(1 \le p < n ),  \\
   < \infty \,\, &(p \ge  n ),
    \end{cases} \quad
    \gamma \ge \frac{\alpha -1+p}{p} r +1,\\
    &{\rm (ii)}\,\,r < p, \quad \gamma > \frac{\alpha-1+p}{p} r +1.
\end{align*}
Then there exists $C>0$ such that for any functions $u \in C_0^\infty (B_R)$ the following inequality holds.
\begin{align}\label{a=1 CCKN ineq}
C \left( \int_{B_R} \frac{|u(x)|^r}{|x|^n (\log \frac{eR}{|x|})^\gamma} \,dx \right)^{\frac{p}{r}} \le \int_{B_R} \frac{|x|^p \,|\nabla u(x) |^p}{|x|^n (\log \frac{eR}{|x|})^\alpha} \,dx
\end{align}
\end{theorem}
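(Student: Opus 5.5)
The plan is to follow the Caffarelli--Kohn--Nirenberg scaling argument, with the linear dilation replaced by the nonlinear scaling of \S\ref{S nonlinear}. We work in the variable $t=\log\frac{eR}{|x|}\in[1,\infty)$ and decompose $B_R$ into ``logarithmic annuli''
$$A_k=\Bigl\{x\in B_R \,:\, 2^k\le \log\tfrac{eR}{|x|}\le 2^{k+1}\Bigr\},\qquad k\ge 0 .$$
On $A_k$ the weights are comparable to constants: $(\log\frac{eR}{|x|})^{-\gamma}\simeq 2^{-k\gamma}$ and $(\log\frac{eR}{|x|})^{-\alpha}\simeq 2^{-k\alpha}$. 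The map $x\mapsto e^{1-\la}R^{1-\la}|x|^{\la-1}x$ with $\la=2^{-k}$ sends $A_k$ onto the fixed annulus $A_0$, on which $|y|^{-n}$ is bounded above and below.

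\textbf{Step 1 (local inequality on one annulus).} On the fixed annulus $A_0$ we use the Sobolev--Poincaré inequality with mean zero (the Gagliardo--Nirenberg inequality with mean zero for $a=1$, cf.\ \S\ref{S App}):
$$\|v-\bar v\|_{L^r(A_0)}\le C\|\nabla v\|_{L^p(A_0)},$$
valid for $r\le \frac{np}{n-p}$ when $p<n$ and for all $r<\infty$ when $p\ge n$. Here $n\ge 2$ makes $A_0$ connected. We pull this back through the nonlinear scaling. The computation of \S\ref{S nonlinear} shows that the angular part of the gradient acquires a factor $\la^{-2}\ge 1$, which works in our favour: the derivative term only gets larger, giving the inequality ``$\ge \la^{\alpha-1+p}\times$'' displayed there. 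With $m_k$ the (weighted) mean of $u$ over $A_k$ this yields
$$\int_{A_k}\frac{|u-m_k|^r}{|x|^n(\log\frac{eR}{|x|})^\gamma}\,dx\le C\,2^{-k(\gamma-1)}\,2^{k(\alpha-1+p)\frac rp}\Bigl(\int_{A_k}\frac{|x|^p|\nabla u|^p}{|x|^n(\log\frac{eR}{|x|})^\alpha}\,dx\Bigr)^{\frac rp}.$$
By the hypothesis $\gamma-1\ge\frac{\alpha-1+p}{p}r$, the power of $2^k$ is $\le 1$ (and strictly decaying in case (ii)).

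\textbf{Step 2 (the means).} Since $u\in C_0^\infty(B_R)$ vanishes near $|x|=R$, i.e.\ near $t=1$, on the first annulus we may use a Poincaré inequality without subtracting the mean. For the remaining annuli we telescope,
$$|m_k|\le |m_0|+\sum_{j<k}|m_{j+1}-m_j|,$$
and bound each $|m_{j+1}-m_j|$ by Step 1 applied on $A_j\cup A_{j+1}$. This gives
$$|m_{j+1}-m_j|^p\le C\,2^{j(\alpha-1+p)}\,2^{-j}\int_{A_j\cup A_{j+1}}\frac{|x|^p|\nabla u|^p}{|x|^n(\log\frac{eR}{|x|})^\alpha}\,dx .$$
The quantity to estimate is then $\sum_k 2^{-k(\gamma-1)}|m_k|^r$. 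This is a weighted discrete Hardy inequality for sequences, which is the discrete analogue of the one-dimensional critical Hardy inequality in the variable $t$.

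\textbf{Step 3 (summation).} For $r\ge p$ we sum the local estimates and use $\sum_k a_k^{r/p}\le(\sum_k a_k)^{r/p}$, together with the discrete Hardy bound for the means. For $r<p$ we apply Hölder's inequality in $k$; the strict inequality $\gamma-1>\frac{\alpha-1+p}{p}r$ in case (ii) supplies a convergent geometric factor, which absorbs the loss from Hölder.

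The main obstacle is Step 2 in the equality case $\gamma-1=\frac{\alpha-1+p}{p}r$ with $r\ge p$, where no geometric decay is available. There we would instead split $u$ into its spherical mean and the remainder. The radial part is handled directly by the one-dimensional weighted Hardy inequality in $t$, which holds because the boundary condition at $t=1$ plays the role of the usual vanishing condition. The remainder has zero angular mean on every sphere, so the mean-zero local inequality of Step 1 applies to it on each $A_k$ without any telescoping of means.
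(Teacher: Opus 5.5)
Your argument is correct in outline, but it handles the non-mean-zero part differently from the paper. The local step is shared: the nonlinear scaling maps a dyadic logarithmic annulus onto a fixed annulus, Sobolev--Poincar\'e is applied there, and the angular factor $\lambda^{-2}\ge 1$ only helps.

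The paper uses the splitting $u=U+(u-U)$, with $U$ the spherical mean, in every case:
\begin{itemize}
\item Jensen's inequality bounds the weighted energy of $\nabla U$ by that of $\nabla u$.
\item The scaled local estimate for $u-U$ is summed over annuli, which works because $r/p\ge 1$.
\item The radial inequality is proved separately from Lemma~\ref{lem:Critical Hardy} and Lemma~\ref{lem:Radial}. For $\alpha>1-p$, with $t=\log\frac{e}{|x|}$ and $\theta=\frac{\alpha-1+p}{p}$, the key step is $\int|U|^r t^{-\gamma}\frac{dx}{|x|^n}\le(\sup|U|t^{-\theta})^{r-p}\int|U|^p t^{-\alpha-p}\frac{dx}{|x|^n}$.
\item Case (ii) is reduced to case (i) by H\"older's inequality in $x$.
\end{itemize}

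You instead telescope the annular means, which amounts to proving a discrete Hardy inequality. You switch to the paper's splitting only at the endpoint $\gamma-1=\theta r$, but that switch is unnecessary.

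\begin{itemize}
\item The increment bound should read $|m_{j+1}-m_j|^p\le C\,2^{j(\alpha-1+p)}E_j$, where $E_j$ is the weighted gradient energy on $A_j\cup A_{j+1}$. There is no extra factor $2^{-j}$: the scaling produces exactly $\lambda^{\alpha-1+p}$, and the normalised means are scale invariant.
\item At the endpoint $\theta>0$, so $2^{-k\theta}|m_k|\le 2^{-k\theta}|m_0|+C\sum_{j<k}2^{-(k-j)\theta}E_j^{1/p}$.
\item Young's inequality $\ell^1*\ell^p\subset\ell^p\subset\ell^r$ then gives $\sum_k 2^{-k(\gamma-1)}|m_k|^r\le C(\sum_j E_j)^{r/p}$.
\end{itemize}
So telescoping alone proves the theorem. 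It needs neither spherical averages, nor the critical Hardy inequalities, nor the radial lemma, which makes it more self-contained. In exchange, the paper's splitting isolates the radial problem, and that is the structure it reuses for $a<1$: the radial part comes from the one-dimensional result and the mean-zero part from Gagliardo--Nirenberg on annuli.

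If you keep your fallback, two points need filling in. First, for $r>p$ the one-dimensional inequality $(\int_1^\infty|g|^r t^{-\gamma}dt)^{p/r}\le C\int_1^\infty|g'|^p t^{-\alpha}dt$ with $g(1)=0$ is not the classical Hardy inequality. You must either verify Bradley's condition $\sup_{s>1}(\int_s^\infty t^{-\gamma}dt)^{1/r}(\int_1^s t^{\alpha/(p-1)}dt)^{(p-1)/p}<\infty$ (modified for $p=1$), which is exactly where $\gamma-1\ge\theta r$ enters, or argue as the paper does. Second, you need the Jensen bound for $\nabla U$ in order to control $\nabla(u-U)$ by $\nabla u$.
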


Weighted critical Hardy inequalities (\ref{a=1 CCKN ineq}) are already shown by \cite{HK} ($\alpha =0$), \cite{H,Sano-TF(JGA)} ($p=n=2, \alpha >-1$), \cite{H,AHN,H p=1} ($C_0^\infty (B_R)\setminus \{ 0\}$) based on generalized rearrangement technique, nonlinear potential theory \cite{Adams} or harmonic transplantation technique. 
In this paper, without those technique, we give a self-contained proof of the inequalities (\ref{a=1 CCKN ineq}) based on nonlinear scaling argument.


\begin{proof}[Proof of Theorem \ref{Thm a=1 CCKN}] Without loss of generality, we can assume $R=1$ because using dilation $\tilde{u}(x)= u(Rx)$, we have
\begin{align*}
    \int_{B_R} \frac{|u(y)|^r}{|y|^n (\log \frac{eR}{|y|})^\gamma} \,dy&= \int_{B_1} \frac{|\tilde{u}(x)|^r}{|x|^n (\log \frac{e}{|x|})^\gamma} \,dx,\\
    \int_{B_R} \frac{|y|^p |\nabla u(y)|^p}{|y|^n (\log \frac{eR}{|y|})^\alpha} \,dy&= \int_{B_1} \frac{|x|^p |\nabla \tilde{u}(x)|^p}{|x|^n (\log \frac{e}{|x|})^\alpha} \,dx.
\end{align*}
Also, we consider the case (i) only because in the case (ii), we can reduce it to (i) by using the H\"older inequality. In fact, we assume that the case (i) is already obtained and we consider the case (ii), that is, $0<r<p$ and $\gamma > \max\{ 1, \gamma_\alpha \}$, where $\gamma_\alpha := \frac{\alpha -1+p}{p}r +1$. 
If $\alpha >1-p$, then 
\begin{align*}
    \int_{B_1} \frac{|u(x)|^r \,dx}{|x|^n (\log \frac{e}{|x|})^\gamma}  
    &\le \left(  \int_{B_1} \frac{|u(x)|^p\,dx}{|x|^n (\log \frac{e}{|x|})^{\alpha+p}} \right)^{\frac{r}{p}} \left( \int_{B_1} |x|^{-n} \left(\log \frac{e}{|x|}\right)^{-\left( \gamma - \frac{\alpha +p}{p} r \right) \frac{p}{p-r}} \,dx \right)^{1-\frac{r}{p}}\\
    &\le C\left(  \int_{B_1} \frac{|u(x)|^p}{|x|^n (\log \frac{e}{|x|})^{\alpha +p}} \,dx\right)^{\frac{r}{p}}
    \le C\left(  \int_{B_1} \frac{|x|^p |\nabla u(x)|^p}{|x|^n (\log \frac{e}{|x|})^{\alpha}} \,dx\right)^{\frac{r}{p}}
\end{align*}
by using $\gamma > \gamma_\alpha$ and (i). 
On the other hand, if $\alpha \le 1-p$, then 
$$\gamma >1\ge \max\{ \alpha+p, \gamma_\alpha\}.$$
Therefore, we have
\begin{align*}
    \int_{B_1} \frac{|u(x)|^r \,dx}{|x|^n (\log \frac{e}{|x|})^\gamma}  
    &\le \left(  \int_{B_1} \frac{|u(x)|^p\,dx}{|x|^n (\log \frac{e}{|x|})^{\gamma }} \right)^{\frac{r}{p}} \left( \int_{B_1} \frac{1}{|x|^n (\log \frac{e}{|x|})^{\gamma }} \,dx \right)^{1-\frac{r}{p}}\\
    &\le C\left(  \int_{B_1} \frac{|u(x)|^p}{|x|^n (\log \frac{e}{|x|})^{\gamma}} \,dx\right)^{\frac{r}{p}}
    \le C\left(  \int_{B_1} \frac{|x|^p |\nabla u(x)|^p}{|x|^n (\log \frac{e}{|x|})^{\alpha}} \,dx\right)^{\frac{r}{p}}
\end{align*}
by using $\gamma >\max\{ 1, \alpha+p\}=1$ and (i).
Thus, we consider the case (i) and $R=1$.

\noindent
{\bf [Step 1: Radial functions]} First, we will show the inequalities (\ref{a=1 CCKN ineq}) for any radial functions $u \in C_{0}^\infty (B_1)$ by using weighted critical Hardy inequalities and radial lemma in the followings. We will show them later. 

\begin{lemma}\label{lem:Critical Hardy}(Weighted critical Hardy inequalities) Let $\tilde{\gamma} >1$ and $p \ge 1$. Then for any $u \in C_0^\infty(B_1)$, the following inequalities hold.
    \begin{align}\label{Critical Hardy}
        &\left( \frac{\tilde{\gamma}-1}{p} \right)^p  \int_{B_1} \frac{|u(x)|^p}{|x|^n (\log \frac{e}{|x|})^{\tilde{\gamma}}} \,dx  
        \le \int_{B_1} \frac{|x|^p \,|\nabla u(x) |^p}{|x|^n (\log \frac{e}{|x|})^{\tilde{\gamma}-p}} \,dx\\
        \label{Critical Hardy log}
        &\left( \frac{\tilde{\gamma}-1}{p} \right)^p  \int_{B_1} \frac{|u(x)|^p\,dx}{|x|^n \left( \log \frac{e}{|x|}\right) \( \log \log \frac{e}{|x|} \)^{\tilde{\gamma}}} 
        \le \int_{B_1} \frac{|x|^p \,|\nabla u(x) |^p\,dx}{|x|^n \left( \log \frac{e}{|x|} \right)^{1-p} \( \log \log \frac{e}{|x|} \)^{\tilde{\gamma}-p} } 
    \end{align}
\end{lemma}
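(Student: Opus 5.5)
The plan is to prove both inequalities of Lemma~\ref{lem:Critical Hardy} by the divergence-theorem (vector field) method, followed by H\"older's inequality. No rearrangement or radial reduction is needed.

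\textbf{First inequality.} For (\ref{Critical Hardy}) consider on $B_1\setminus\{0\}$ the vector field
\begin{align*}
V(x)=\frac{x}{|x|^n}\Big(\log\frac{e}{|x|}\Big)^{1-\tilde\gamma}.
\end{align*}
Since $\operatorname{div}(x|x|^{-n})=0$ away from the origin and $\nabla (\log\frac{e}{|x|})^{1-\tilde\gamma}=(\tilde\gamma-1)(\log\frac{e}{|x|})^{-\tilde\gamma}\frac{x}{|x|^2}$, we get
\begin{align*}
\operatorname{div}V=(\tilde\gamma-1)|x|^{-n}\Big(\log\frac{e}{|x|}\Big)^{-\tilde\gamma}.
\end{align*}
Multiply this by $|u|^p$ and integrate over $B_1\setminus B_\ep$. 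For $1\le p<2$ we use $(u^2+\delta)^{p/2}$ in place of $|u|^p$ and let $\delta\to0$ at the end, to avoid differentiability issues.

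There is no contribution from $\pd B_1$, since $u$ has compact support. The boundary term on $\pd B_\ep$ is bounded by
\begin{align*}
\w_n\,\|u\|_\infty^p\Big(\log\frac{e}{\ep}\Big)^{1-\tilde\gamma}\to 0 \quad (\ep\to0),
\end{align*}
precisely because $\tilde\gamma>1$. Integrating by parts therefore yields
\begin{align*}
(\tilde\gamma-1)\int_{B_1}\frac{|u|^p}{|x|^n(\log\frac{e}{|x|})^{\tilde\gamma}}\,dx\le p\int_{B_1}|u|^{p-1}|\nabla u|\,|x|^{1-n}\Big(\log\frac{e}{|x|}\Big)^{1-\tilde\gamma}dx.
\end{align*}

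If $p=1$ this is already (\ref{Critical Hardy}). If $p>1$, split the integrand on the right as
\begin{align*}
\Big[|u|^{p-1}\big(|x|^{-n}(\log\tfrac{e}{|x|})^{-\tilde\gamma}\big)^{\frac{p-1}{p}}\Big]\cdot\Big[|x|\,|\nabla u|\,|x|^{-\frac np}(\log\tfrac{e}{|x|})^{1-\frac{\tilde\gamma}{p}}\Big].
\end{align*}
The logarithmic exponents check out: $\tilde\gamma\frac{p-1}{p}+\frac{\tilde\gamma}{p}-1=\tilde\gamma-1$. Apply H\"older with exponents $\frac{p}{p-1}$ and $p$. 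The $p$-th power of the second factor is exactly $|x|^{p-n}|\nabla u|^p(\log\frac{e}{|x|})^{p-\tilde\gamma}$. Writing $I$ for the left integral and $J$ for the right-hand side of (\ref{Critical Hardy}), this gives $(\tilde\gamma-1)I\le p\,I^{\frac{p-1}{p}}J^{\frac1p}$. Dividing by $I^{\frac{p-1}{p}}$ (which is finite, and the claim is trivial when $I=0$) gives (\ref{Critical Hardy}) with the constant $(\frac{\tilde\gamma-1}{p})^p$.

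\textbf{Second inequality.} For (\ref{Critical Hardy log}) the argument is identical with
\begin{align*}
V(x)=\frac{x}{|x|^n}\Big(\log\log\frac{e}{|x|}\Big)^{1-\tilde\gamma}.
\end{align*}
Here
\begin{align*}
\operatorname{div}V=(\tilde\gamma-1)|x|^{-n}\Big(\log\frac{e}{|x|}\Big)^{-1}\Big(\log\log\frac{e}{|x|}\Big)^{-\tilde\gamma}.
\end{align*}
The factor $\log\log\frac{e}{|x|}$ vanishes at $|x|=1$, but this is harmless because $\operatorname{supp}u\Subset B_1$. The boundary term at $\pd B_\ep$ is $O\big((\log\log\frac{e}{\ep})^{1-\tilde\gamma}\big)\to0$, again since $\tilde\gamma>1$. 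In the H\"older step the second factor is taken to be $|x|^{1-\frac np}|\nabla u|(\log\frac{e}{|x|})^{\frac{p-1}{p}}(\log\log\frac{e}{|x|})^{1-\frac{\tilde\gamma}{p}}$. Its $p$-th power reproduces the right-hand weight of (\ref{Critical Hardy log}), and we conclude exactly as before.

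The only delicate point is the vanishing of the boundary flux at the origin. This is exactly where $\tilde\gamma>1$ is used, and it is also why $u$ need not vanish at $0$. Apart from that, the proof is routine bookkeeping of exponents.
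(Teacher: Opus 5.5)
Your argument is correct and is essentially the paper's proof: the paper also integrates $|u|^p$ against ${\rm div}$ of the same two vector fields and then applies H\"older's inequality exactly as you do. One small repair is needed. Your regularization $(u^2+\delta)^{p/2}$ is not compactly supported. This is harmless for the first inequality, but it breaks the second one near $|x|=1$, where $(\log\log\frac{e}{|x|})^{-\tilde{\gamma}}$ is not integrable and the flux of $V$ blows up. So use $(u^2+\delta)^{p/2}-\delta^{p/2}$ instead, or drop the regularization entirely, since $|u|^p$ is $C^1$ for $p>1$ and Lipschitz for $p=1$.
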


\begin{lemma}\label{lem:Radial}(Radial lemma)
    For any radial functions $u \in C_0^\infty (B_1)$ and $x \in B_1 \setminus \{ 0\}$, the following estimates hold.
    \begin{align*}
        |u(x)| \le \left( \frac{1}{\w_n} \int_{B_1} \frac{|x|^p \,|\nabla u|^p\,dx}{|x|^n (\log \frac{e}{|x|})^{\alpha}} \right)^{\frac{1}{p}} \begin{cases}
            \left( \frac{p-1}{|\alpha +p -1|} \right)^{\frac{p-1}{p}} \left| \left( \log \frac{e}{|x|} \right)^{\frac{\alpha -1+p}{p-1}} -1\right|^{\frac{p-1}{p}} &\text{if}\,\, \alpha \not= 1-p,\\
            \left(\log \log \frac{e}{|x|} \right)^{\frac{p-1}{p}}  &\text{if}\,\, \alpha =1-p.
        \end{cases}
    \end{align*}
\end{lemma}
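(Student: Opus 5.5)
The statement immediately above is the radial lemma (Lemma \ref{lem:Radial}). I would prove it by a one-dimensional argument: the fundamental theorem of calculus along rays, followed by H\"older's inequality with the weight split so that the gradient integral appears exactly.

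\textbf{Setup.} Let $u\in C_0^\infty(B_1)$ be radial and write $u(x)=u(|x|)$. Fix $x$ with $\rho=|x|\in(0,1)$. Since $u$ vanishes near $|x|=1$, we have
\[
|u(\rho)|=\left|\int_\rho^1 u'(t)\,dt\right| \le \int_\rho^1 |u'(t)|\,dt .
\]

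\textbf{Splitting the weight.} Write
\[
|u'(t)| = \left( |u'(t)|\, t^{\frac{p-1}{p}} \left(\log \tfrac{e}{t}\right)^{-\frac{\alpha}{p}} \right) \cdot \left( t^{-\frac{p-1}{p}} \left(\log \tfrac{e}{t}\right)^{\frac{\alpha}{p}} \right).
\]
Apply H\"older's inequality with exponents $p$ and $p'=p/(p-1)$. The first factor gives
\[
\left(\int_\rho^1 |u'|^p\, t^{p-1}\left(\log \tfrac{e}{t}\right)^{-\alpha}dt\right)^{1/p}.
\]
For radial $u$ we have $|\nabla u|=|u'|$, so in polar coordinates this equals
\[
\left(\frac{1}{\w_n}\int_{B_1}\frac{|x|^p|\nabla u|^p}{|x|^n(\log\frac{e}{|x|})^\alpha}dx\right)^{1/p},
\]
after enlarging the range of integration from $(\rho,1)$ to $(0,1)$.

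\textbf{Evaluating the second factor.} It is
\[
\left(\int_\rho^1 t^{-1}\left(\log \tfrac{e}{t}\right)^{\frac{\alpha}{p-1}}dt\right)^{\frac{p-1}{p}}.
\]
Substitute $s=\log\frac{e}{t}$, so $ds=-dt/t$ and $s$ runs over $[1,\log\frac{e}{\rho}]$. The integral becomes $\int_1^{\log(e/\rho)} s^{\frac{\alpha}{p-1}}\,ds$.
\begin{itemize}
\item If $\frac{\alpha}{p-1}\neq -1$, i.e.\ $\alpha\neq 1-p$, the integral equals $\frac{p-1}{\alpha+p-1}\left(\left(\log\frac{e}{\rho}\right)^{\frac{\alpha+p-1}{p-1}}-1\right)$. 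This quantity is nonnegative and equals the absolute-value expression in the statement.
\item If $\alpha=1-p$, the integral equals $\log\log\frac{e}{\rho}$.
\end{itemize}
Raising to the power $\frac{p-1}{p}$ gives exactly the two cases of the stated bound.

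\textbf{Main obstacle.} The only delicate point is $p=1$, where H\"older's inequality degenerates to a sup bound. There the exponent $\frac{p-1}{p}$ is $0$ and the claim reads $|u(\rho)|\le \frac{1}{\w_n}\int_{B_1} |x|^{1-n}(\log\frac{e}{|x|})^{-\alpha}|\nabla u|\,dx$. This does not follow from the integral of $|u'|$ alone, because the weight $(\log\frac{e}{t})^{-\alpha}$ is not bounded below by $1$ when $\alpha>0$.

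For $\alpha\le 0$ the weight is at least $1$, and the bound follows directly from $\int_\rho^1|u'|\,dt$. For $\alpha>0$ I would read the statement through the convention $0^0$ and the limit $p\to1$. The $p=1$ inequality as written is then actually false for $\alpha>0$, so I would restrict it to $\alpha\le0$, or interpret the bracket as $\sup_{\rho<t<1}(\log\frac{e}{t})^{\alpha}$, which is the limit of $\left(\int_\rho^1 t^{-1}(\log\frac{e}{t})^{\alpha/(p-1)}dt\right)^{\frac{p-1}{p}}$ as $p\to1$. I will note this, and carry out the H\"older case $p>1$ as the main argument.
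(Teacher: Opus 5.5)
Your proof is correct and follows the paper's argument: the fundamental theorem of calculus along the ray, then H\"older's inequality with the weight split as $t^{\frac{p-1}{p}}(\log\frac{e}{t})^{-\frac{\alpha}{p}}$ against its reciprocal, then the substitution $s=\log\frac{e}{t}$ in the second factor (the paper writes out only the case $\alpha>1-p$, while you also handle the sign bookkeeping for $\alpha<1-p$ and the case $\alpha=1-p$). Your caveat about $p=1$ is accurate: the exponent $\frac{\alpha-1+p}{p-1}$ is undefined there, the paper's H\"older step tacitly requires $p>1$, and the limiting bound with factor $(\log\frac{e}{|x|})^{\alpha}$ for $\alpha\ge 0$ that you identify is the right replacement (it is what Step 1 of the proof of Theorem \ref{Thm a=1 CCKN} actually needs when $p=1$).
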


\noindent
If $\alpha >1-p$, by using (i), (\ref{Critical Hardy}) and Lemma \ref{lem:Radial}, we have
\begin{align*}
    \int_{B_1} \frac{|u(x)|^r }{|x|^n (\log \frac{e}{|x|})^\gamma} \,dx
    &\le C
    \int_{B_1} \frac{|u(x)|^r }{|x|^n (\log \frac{e}{|x|})^{\frac{\alpha-1+p}{p}r +1}}\,dx\\
    &\le C \left( \underset{x \in B_1}{\rm ess\,sup} \,\,\frac{|u(x)|}{\left( \log \frac{e}{|x|} \right)^{\frac{\alpha-1+p}{p}}} \right)^{r-p} \int_{B_1} \frac{|u(x) |^p}{|x|^n (\log \frac{e}{|x|})^{\alpha+p}} \,dx \\
    &\le C \left( \int_{B_1} \frac{|x|^p \,|\nabla u(x) |^p}{|x|^n (\log \frac{e}{|x|})^{\alpha}} \,dx \right)^{\frac{r}{p}}.
\end{align*}
If $\alpha <1-p\,(<\gamma -p)$, by using (\ref{Critical Hardy}) and Lemma \ref{lem:Radial}, we have
\begin{align*}
    \int_{B_1} \frac{|u(x)|^r }{|x|^n (\log \frac{e}{|x|})^\gamma} \,dx
    &\le \left( \underset{x \in B_1}{\rm ess\,sup} \,\,|u(x)| \right)^{r-p}
    \int_{B_1} \frac{|u(x)|^p }{|x|^n (\log \frac{e}{|x|})^{\gamma}}\,dx\\
    &\le C  \left( \int_{B_1} \frac{|x|^p \,|\nabla u(x) |^p}{|x|^n (\log \frac{e}{|x|})^{\alpha}} \,dx \right)^{\frac{r-p}{p}} \int_{B_1} \frac{|x|^p \,|\nabla u(x) |^p}{|x|^n (\log \frac{e}{|x|})^{\gamma-p}} \,dx \\
    &\le C  \left( \int_{B_1} \frac{|x|^p \,|\nabla u(x) |^p}{|x|^n (\log \frac{e}{|x|})^{\alpha}} \,dx \right)^{\frac{r}{p}}.
\end{align*}
If $\alpha =1-p$, by using (\ref{Critical Hardy log}) and Lemma \ref{lem:Radial}, we have
\begin{align*}
    \int_{B_1} \frac{|u(x)|^r }{|x|^n (\log \frac{e}{|x|})^\gamma} \,dx
    &\le \left( \underset{x \in B_1}{\rm ess\,sup} \,\,\frac{|u(x)|}{\left( \log \log \frac{e}{|x|} \right)^{\frac{p-1}{p}}} \right)^{r-p}
    \int_{B_1} \frac{|u(x)|^p \left( \log \log \frac{e}{|x|} \right)^{\frac{p-1}{p}(r-p)}}{|x|^n (\log \frac{e}{|x|})^{\gamma}}\,dx\\
    &\le C  \left( \int_{B_1} \frac{|x|^p \,|\nabla u(x) |^p}{|x|^n (\log \frac{e}{|x|})^{\alpha}} \,dx \right)^{\frac{r-p}{p}} \int_{B_1} \frac{|u(x) |^p}{|x|^n \left( \log \frac{e}{|x|} \right) \( \log \log \frac{e}{|x|} \)^{p} } \,dx \\
    &\le C  \left( \int_{B_1} \frac{|x|^p \,|\nabla u(x) |^p}{|x|^n (\log \frac{e}{|x|})^{\alpha}} \,dx \right)^{\frac{r}{p}},
\end{align*}
where the second inequality comes from the boundedness of the function:
$$\underset{x \in B_1}{\rm ess\,sup} \,\,\left( \log \frac{e}{|x|}\right)^{1-\gamma} \left( \log \log \frac{e}{|x|}\right)^{\frac{p-1}{p}r+1}< \infty$$
Therefore, we get the inequalities (\ref{a=1 CCKN ineq}) for any radial functions $u \in C_{0}^\infty (B_1)$. 

\noindent
{\bf [Step 2: Spherical mean zero functions]} For $k \in \N$, we set 
\begin{align*}
    A_k=\left\{ x \in \re^n \,\,\Biggr|\,\, e \left( \frac{1}{2e}\right)^{2^k} <|x| \le e \left( \frac{1}{2e}\right)^{2^{k-1}}\right\}\,\,\text{and}\,\, A_0=B_{1} \setminus \overline{B_{1/2}}.
\end{align*}
First, we claim that the inequalities (\ref{a=1 CCKN ineq}) hold on the annulus $A_k$ for any spherical mean zero functions as follows. We will show Lemma \ref{lem:mean zero a=1} later. 

\begin{lemma}\label{lem:mean zero a=1}
Suppose the condition {\rm (i)} in Theorem \ref{Thm a=1 CCKN}. Then 
    there exists $C>0$ such that for any $k \in \N$ and any $u \in C^\infty (B_1)$ with $\int_{\mathbb{S}^{n-1}} u(t\w) \,dS_\w =0$ for any $t \in [0, 1)$, 
    the following inequality holds.
    \begin{align}\label{CCKN A_k}
     \int_{A_k} \frac{|u(x)|^r }{|x|^n (\log \frac{e}{|x|})^\gamma} \,dx
    \le C  \left( \int_{A_k} \frac{|x|^p \,|\nabla u(x) |^p}{|x|^n (\log \frac{e}{|x|})^{\alpha}} \,dx \right)^{\frac{r}{p}}
\end{align}
\end{lemma}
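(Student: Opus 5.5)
The plan is to use the nonlinear scaling of \S\ref{S nonlinear} to map every annulus $A_k$ onto the fixed annulus $A_1$, and then apply an unweighted Sobolev--Poincar\'e inequality for mean-zero functions on $A_1$. This mirrors the scaling argument of \cite{CKN}, with dilations replaced by the maps $x\mapsto |x|^{\la-1}x$ (suitably normalized).

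\textbf{Key observation.} Write $|x| = e\,(2e)^{-s}$, so that $\log\frac{e}{|x|} = s\log(2e)$. Then $A_k$ corresponds exactly to $s\in[2^{k-1},2^k)$. On each $A_k$ the logarithmic weight is therefore comparable to $2^k$, within a factor $2$.

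\textbf{Step 1: rescale $A_k$ to $A_1$.} Set $\la = 2^{1-k}\in(0,1]$ and define $v$ by $u(x)=v(y)$, where $\frac{|y|}{e}=\left(\frac{|x|}{e}\right)^{\la}$ and $y$ is parallel to $x$. Under this map $A_k$ is carried onto $A_1$, and $\log\frac{e}{|y|}=\la\log\frac{e}{|x|}$. Since $\frac{d|x|}{|x|}=\la^{-1}\frac{d|y|}{|y|}$, the computation in \S\ref{S nonlinear}, restricted to $A_k$ with $f\equiv 1$, gives
\begin{align*}
\int_{A_k}\frac{|u|^r\,dx}{|x|^n(\log\frac{e}{|x|})^\gamma}
&=\la^{\gamma-1}\int_{A_1}\frac{|v|^r\,dy}{|y|^n(\log\frac{e}{|y|})^\gamma},\\
\int_{A_k}\frac{|x|^p|\nabla u|^p\,dx}{|x|^n(\log\frac{e}{|x|})^\alpha}
&\ge\la^{\alpha-1+p}\int_{A_1}\frac{|y|^p|\nabla v|^p\,dy}{|y|^n(\log\frac{e}{|y|})^\alpha}.
\end{align*}
The inequality in the second line points the right way: the tangential part of the gradient is multiplied by $\la^{-2}\ge1$, and this is harmless here because it only enlarges the right-hand side. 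The spherical-mean-zero property is preserved, since the map acts only on the radial variable.

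\textbf{Step 2: inequality on the fixed annulus.} On $A_1$ the quantities $|y|$ and $\log\frac{e}{|y|}$ are bounded above and below by constants independent of $k$, so all weights can be dropped at the cost of a constant. Because $\int_{\mathbb{S}^{n-1}}v(t\w)\,dS_\w=0$ for every $t$, $v$ has zero mean on $A_1$. The Sobolev--Poincar\'e (Gagliardo--Nirenberg with mean zero, cf.\ \S\ref{S App}) inequality on the Lipschitz domain $A_1$ then gives
\[
\left(\int_{A_1}|v|^r\,dy\right)^{1/r}\le C\left(\int_{A_1}|\nabla v|^p\,dy\right)^{1/p}
\quad\text{for } p\le r\le \tfrac{np}{n-p} \ (p<n),\ \text{resp. } p\le r<\infty \ (p\ge n).
\]
For $p>n$ one may alternatively use Morrey's inequality together with $r\ge p$ and the boundedness of $A_1$.

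\textbf{Step 3: combine.} Putting Steps 1 and 2 together yields
\[
\text{LHS of }(\ref{CCKN A_k})\le C\,\la^{\gamma-1-\frac{\alpha-1+p}{p}r}\left(\int_{A_k}\frac{|x|^p|\nabla u|^p\,dx}{|x|^n(\log\frac{e}{|x|})^\alpha}\right)^{r/p}.
\]
Condition (i) says precisely that $\gamma-1-\frac{\alpha-1+p}{p}r\ge0$. Since $\la\le1$, the power of $\la$ is at most $1$, and this gives (\ref{CCKN A_k}) with $C$ independent of $k$.

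\textbf{Main obstacle.} The substantive input is the Sobolev--Poincar\'e inequality on the annulus for mean-zero functions, which is where the upper bound $r\le\frac{np}{n-p}$ enters. One must also check carefully that the nonradial gradient estimate of \S\ref{S nonlinear} holds on each annulus and not only on the whole ball. This is immediate, because that computation is pointwise in the radial variable.
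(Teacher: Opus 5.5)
Your proposal is correct and is essentially the paper's proof. The nonlinear scaling $|y|/e=(|x|/e)^{\la}$ with $\la=2^{1-k}$ carries $A_k$ onto $A_1$, and the identities of \S\ref{S nonlinear} transfer the problem there, with the gradient inequality going the favorable way because $\la\le 1$. On $A_1$ the weights are dropped and the Sobolev and mean-zero Poincar\'e inequalities are applied. Condition (i) then makes the leftover factor $\la^{\gamma-1-\frac{\alpha-1+p}{p}r}\le 1$. The only cosmetic difference is that you treat $k=1$ uniformly as the case $\la=1$, whereas the paper handles $A_1$ first and then rescales for $k\ge 2$.
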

Note that if $k=0$, then we also obtain the inequalities (\ref{CCKN A_k}) on $A_0$ for any spherical mean zero functions $u \in C^\infty (B_1)$ 
in the same way.  
If we get Lemma \ref{lem:mean zero a=1}, then we get the inequalities (\ref{a=1 CCKN ineq}) on $B_1$ for any spherical mean zero functions $u$, that is $\int_{\mathbb{S}^{n-1}} u(t\w) \,dS_\w =0$ for any $t \in [0, 1)$. In fact, since $B_1 \setminus \{ 0\} = \bigcup_{k=0}^\infty A_k$, taking $\sum_{k=0}^\infty$ on (\ref{CCKN A_k}), we have
\begin{align*}
\int_{B_1} \frac{|u(x)|^r }{|x|^n (\log \frac{e}{|x|})^\gamma} \,dx
&= \sum_{k=0}^\infty
     \int_{A_k} \frac{|u(x)|^r }{|x|^n (\log \frac{e}{|x|})^\gamma} \,dx\\
    &\le C  \sum_{k=0}^\infty \left( \int_{A_k} \frac{|x|^p \,|\nabla u(x) |^p}{|x|^n (\log \frac{e}{|x|})^{\alpha}} \,dx \right)^{\frac{r}{p}}
    \le C  \left( \int_{B_1} \frac{|x|^p \,|\nabla u(x) |^p}{|x|^n (\log \frac{e}{|x|})^{\alpha}} \,dx \right)^{\frac{r}{p}},
\end{align*}
where the last inequality comes from $\frac{r}{p} \ge 1$ and the inequality 
\begin{align*}
    \left( \,\sum_{k=0}^\infty |a_k|^c \,\right)^{\frac{1}{c}} \le \sum_{k=0}^\infty |a_k| < \infty 
\end{align*}
holds for $c \ge 1$ and $\{ a_k \}_{k=0}^\infty \subset \re$. 
Therefore, we get the inequalities (\ref{a=1 CCKN ineq}) for any spherical mean zero functions $u \in C^\infty (B_1)$.


\noindent
{\bf [Step 3: Any functions]} Finally, we show (\ref{a=1 CCKN ineq}) for any functions $u \in C_0^\infty (B_1)$ by using [Step 1] and [Step 2]. For $u$, we consider its spherical average
\begin{align*}
    U(x)=U(t)=\frac{1}{\w_n} \int_{\mathbb{S}^{n-1}} u(t\w) \,dS_\w \quad (t=|x|).
\end{align*}
By using Jensen's inequality, we have
\begin{align*}
    |U'(t)|^p \le \left( \frac{1}{\w_n} \int_{\mathbb{S}^{n-1}} \left| \frac{\partial u}{\partial t} (t\w) \right| \,dS_\w\right)^p
    \le \frac{1}{\w_n} \int_{\mathbb{S}^{n-1}} \left| \frac{\partial u}{\partial t} (t\w) \right|^p \,dS_\w,
\end{align*}
which implies 
\begin{align}\label{nabla U}
    \int_{B_1} \frac{|x|^p \,|\nabla U |^p}{|x|^n (\log \frac{e}{|x|})^{\alpha}} \,dx 
    &\le \w_n \int_0^1 |U'(t)|^p t^{p-1} \left( \log \frac{e}{t} \right)^{-\alpha} \,dt \notag \\
    &\le \int_0^1 \int_{\mathbb{S}^{n-1}} \left| \frac{\partial u}{\partial t} (t\w) \right|^p t^{p-1} \left( \log \frac{e}{t} \right)^{-\alpha} \,dt \,dS_\w \notag \\
    &\le \int_{B_1} \frac{|x|^p \,|\nabla u |^p}{|x|^n (\log \frac{e}{|x|})^{\alpha}} \,dx.
\end{align}
Since $u-U$ is a spherical mean zero function, we can apply [Step 2] for $u-U$. Thus, by using the triangle inequality, [Step 2] and (\ref{nabla U}), we have
\begin{align*}
     &\left( \int_{B_1} \frac{|u|^r \,dx}{|x|^n (\log \frac{e}{|x|})^\gamma} \right)^{\frac{1}{r}} - \left( \int_{B_1} \frac{|U|^r \,dx}{|x|^n (\log \frac{e}{|x|})^\gamma} \right)^{\frac{1}{r}}
     \le \left( \int_{B_1} \frac{|u-U|^r \,dx}{|x|^n (\log \frac{e}{|x|})^\gamma} \right)^{\frac{1}{r}} \\
    &\le C  \left( \int_{B_1} \frac{|x|^p \,|\nabla u- \nabla U |^p\,dx}{|x|^n (\log \frac{e}{|x|})^{\alpha}}  \right)^{\frac{1}{p}}\\
    &\le C\left( \int_{B_1} \frac{|x|^p \,|\nabla u |^p\,dx}{|x|^n (\log \frac{e}{|x|})^{\alpha}}  \right)^{\frac{1}{p}} + C\left( \int_{B_1} \frac{|x|^p \,|\nabla U |^p\,dx}{|x|^n (\log \frac{e}{|x|})^{\alpha}} \right)^{\frac{1}{p}}
    \le C \left( \int_{B_1} \frac{|x|^p \,|\nabla u |^p\,dx}{|x|^n (\log \frac{e}{|x|})^{\alpha}}  \right)^{\frac{1}{p}}.
\end{align*}
Therefore, by using [Step 1] for $U$ and (\ref{nabla U}), we have
\begin{align*}
    \left( \int_{B_1} \frac{|u|^r \,dx}{|x|^n (\log \frac{e}{|x|})^\gamma} \right)^{\frac{1}{r}} 
    &\le C \left( \int_{B_1} \frac{|x|^p \,|\nabla u |^p\,dx}{|x|^n (\log \frac{e}{|x|})^{\alpha}}  \right)^{\frac{1}{p}}+ \left( \int_{B_1} \frac{|U|^r \,dx}{|x|^n (\log \frac{e}{|x|})^\gamma} \right)^{\frac{1}{r}}\\
    &\le C \left( \int_{B_1} \frac{|x|^p \,|\nabla u |^p\,dx}{|x|^n (\log \frac{e}{|x|})^{\alpha}}  \right)^{\frac{1}{p}}+ C \left( \int_{B_1} \frac{|x|^p \,|\nabla U |^p\,dx}{|x|^n (\log \frac{e}{|x|})^{\alpha}}  \right)^{\frac{1}{p}}\\
    &\le C \left( \int_{B_1} \frac{|x|^p \,|\nabla u |^p\,dx}{|x|^n (\log \frac{e}{|x|})^{\alpha}}  \right)^{\frac{1}{p}}. 
\end{align*}
Hence, we complete the proof of Theorem \ref{Thm a=1 CCKN}. 
\end{proof}

\begin{proof}[Proof of Lemma \ref{lem:Critical Hardy}]
By using the integration by parts and the H\"older inequality, we have 
\begin{align*}
   (\tilde{\gamma }-1) \int_{B_1} \frac{|u(x)|^p \,dx}{|x|^n (\log \frac{e}{|x|})^{\tilde{\gamma}}} 
   &= 
   \int_{B_1} |u(x)|^p \,{\rm div} \left( \frac{x}{|x|^n(\log \frac{e}{|x|})^{\tilde{\gamma} -1}} \right) dx\\
   &\le p \int_{B_1} \frac{|u(x)|^{p-1} |\nabla u(x)|}{|x|^{n-1} (\log \frac{e}{|x|})^{\tilde{\gamma} -1}} \,dx\\
   &\le p \left( \int_{B_1} \frac{|x|^{p} |\nabla u(x)|^p \,dx}{|x|^n (\log \frac{e}{|x|})^{\tilde{\gamma}-p}} \right)^{\frac{1}{p}} \left( \int_{B_1} \frac{|u(x)|^p \,dx}{|x|^n (\log \frac{e}{|x|})^{\tilde{\gamma}}} \right)^{1-\frac{1}{p}}
\end{align*}
    which imply (\ref{Critical Hardy}). By using 
    \begin{align*}
        {\rm div} \left( \frac{x}{|x|^n (\log \log \frac{e}{|x|})^{\tilde{\gamma}-1}} \right) = \frac{\tilde{\gamma}-1}{|x|^n (\log \frac{e}{|x|}) (\log \log \frac{e}{|x|})^{\tilde{\gamma}}},
    \end{align*}
    we can also show (\ref{Critical Hardy log}) in the same way. 
\end{proof}

\begin{proof}[Proof of Lemma \ref{lem:Radial}]
We show only the case where $\alpha >1-p$. For any radial functions $u \in C_0^\infty (B_1)$ and $x \in B_1 \setminus \{ 0\}$, we have 
    \begin{align*}
        |u(|x|)| &\le \int_{|x|}^1 |u'(s)|\,ds 
        \le \left( \int_{|x|}^1 \frac{|u(s)|^p s^{p-1}}{(\log \frac{e}{s})^\alpha} \,ds \right)^{\frac{1}{p}} \left( \int_{|x|}^1 s^{-1} \left( \log \frac{e}{s} \right)^{\frac{\alpha}{p-1}} \,ds \right)^{\frac{p-1}{p}}\\
        &= \left( \frac{1}{\w_n} \int_{B_1 \setminus B_{|x|}} \frac{|y|^p \,|\nabla u(y) |^p}{|y|^n (\log \frac{e}{|y|})^{\alpha}} \,dy \right)^{\frac{1}{p}}\left( \frac{p-1}{\alpha + p-1} \right)^{\frac{p-1}{p}}
        \left( \left( \log \frac{e}{|x|} \right)^{\frac{\alpha +p-1}{p-1} } -1\right)^{\frac{p-1}{p}}
    \end{align*}
    which imply Lemma \ref{lem:Radial}. 
\end{proof}

\begin{proof}[Proof of Lemma \ref{lem:mean zero a=1}]
First, we consider the case where $k=1$. 
Note that 
$$\int_{A_1} u(x) \,dx =\int_{1/4e}^{1/2} \int_{\mathbb{S}^{n-1}} u(r\w) \,dS_\w \,r^{n-1} dr =0$$ because of spherical mean zero condition of $u$.  
By using the Sobolev inequality: 
$$\| u\|_{L^r(A_1)} \le C \,\| u \|_{W^{1, p}(A_1)}$$ for any functions $u \in C^\infty (A_1)$ and the Poincar\'e inequality:
$$\| u\|_{L^p(A_1)} \le C \,\| \nabla u \|_{L^{p}(A_1)}$$
for any functions $u \in C^{\infty}(A_1)$ with $\int_{A_1} u(x) \,dx=0$ (see e.g. \cite[p.290 Theorem 1]{Evans}), we have
\begin{align}\label{A_1}
     \int_{A_1} \frac{|u(x)|^r }{|x|^n (\log \frac{e}{|x|})^\gamma} \,dx
     &\le C \int_{A_1} |u(x)|^r\,dx \notag \\
     &\le C \left( \int_{A_1} |\nabla u(x)|^p + |u(x)|^p \,dx \right)^{\frac{r}{p}} \notag \\
     &\le C \left( \int_{A_1} |\nabla u(x)|^p  \right)^{\frac{r}{p}} \notag \\
    &\le C  \left( \int_{A_1} \frac{|x|^p \,|\nabla u(x) |^p}{|x|^n (\log \frac{e}{|x|})^{\alpha}} \,dx \right)^{\frac{r}{p}},
\end{align}
where the first and the last inequalities come from the boundedness of weight functions $|x|^{-n} \left( \log \frac{e}{|x|} \right)^{-\gamma}$ and $|x|^{p-n} \left( \log \frac{e}{|x|} \right)^{-\alpha}$ on $A_1$. 
For $k \ge 2$, we consider the nonlinear scaling in \S \ref{S nonlinear} as follows.
\begin{align*}
    u_k (y) = u(x), \,\text{where}\,\, y= e^{1-\la_k} |x|^{\la_k -1}x \,\,\text{and}\,\, \la_k = 2^{-k+1}(\le 1)
\end{align*}
Note that $u_k$ is also a spherical mean zero function on $A_1$ because $u$ is a spherical mean zero function on $A_k$. 
By using the condition (i) in Theorem \ref{Thm a=1 CCKN}, (\ref{A_1}) and the calculation in \S \ref{S nonlinear}, we have 
\begin{align*}
    \left( \int_{A_k} \frac{|u(x)|^r \,dx}{|x|^n (\log \frac{e}{|x|})^\gamma} \right)^{\frac{1}{r}}
    &= \la_k^{\frac{\gamma -1}{r}} \left( \int_{A_1} \frac{|u_k(y)|^r \,dy}{|y|^n (\log \frac{e}{|y|})^\gamma} \right)^{\frac{1}{r}}\\
    &\le C \la_k^{\frac{p+\alpha -1}{p}} \left( \int_{A_1} \frac{|y|^p \,|\nabla u_k(y) |^p\,dy}{|y|^n (\log \frac{e}{|y|})^{\alpha}} \right)^{\frac{1}{p}}\\
    &\le C \left( \int_{A_k} \frac{|x|^p \,|\nabla u(x) |^p}{|x|^n (\log \frac{e}{|x|})^{\alpha}} \,dx \right)^{\frac{1}{p}}.
\end{align*}
Therefore, we complete the proof of Lemma \ref{lem:mean zero a=1}. 
\end{proof}

%
%

\section{Sufficiency part of Theorem \ref{Thm CCKN} when $n =1$}\label{S n=1}

In this section, we show the inequalities (\ref{CCKN ineq}) when $n=1$. Thanks to the reflection, it is enough to show the inequalities (\ref{CCKN ineq}) on the interval $[0, R)$.

\begin{theorem}\label{Thm CCKN n=1}
Let $0< a \le 1, p, q \ge 1, r>0, \alpha \in \re$ and $\beta, \gamma >1$. If the following conditions hold:
\begin{align*}
    &{\rm (i)}\quad \frac{\gamma -1}{r} \ge \frac{\alpha -1 +p}{p} a \,+ \frac{\beta-1}{q}(1-a),\\
    &{\rm (ii)} \quad 
    \frac{1}{r}\ge \frac{a}{p} + \frac{1-a}{q} -a,\\
    &{\rm (iii)}\quad  \frac{1}{r} \le \frac{a}{p}  + \frac{1-a}{q}\,\,\,\text{if} \,\,\, \frac{\alpha -1+p}{p}= \frac{\gamma -1}{r}=\frac{\beta -1}{q},
\end{align*}
then there exists $C>0$ such that for any functions $f \in C_0^\infty [0, R)$ the following inequality holds.
\begin{align}\label{CCKN ineq n=1}
C \left( \int_0^R \frac{|f(x)|^r}{(\log \frac{eR}{x})^\gamma} \frac{dx}{x} \right)^{\frac{1}{r}} 
\le \left( \int_0^R \frac{x^p \,|f'(x)|^p}{(\log \frac{eR}{x})^\alpha} \frac{dx}{x}\right)^{\frac{a}{p}} \left( \int_0^R \frac{|f(x)|^q}{(\log \frac{eR}{x})^\beta} \frac{dx}{x} \right)^{\frac{1-a}{q}}
\end{align}
\end{theorem}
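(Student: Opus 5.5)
The plan is to reduce Theorem \ref{Thm CCKN n=1} to the one-dimensional case of Proposition \ref{Prop CKN ball} with $n=1$ and $R=1$, by a change of variables that turns the logarithmic weights into power weights. By dilation we may take $R=1$. Set $t=\log\frac{e}{x}\in(1,\infty)$, so that $\frac{dx}{x}=-dt$ and $x f'(x)=-\frac{d}{dt}f$. Then substitute $t=s^{-1/\mu}$ with $s\in(0,1)$, where $\mu>0$ is a parameter fixed later, and define $g(s)=f(x)$.

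A direct computation gives
\begin{align*}
\int_0^1 \frac{|f|^r}{(\log\frac{e}{x})^\gamma}\frac{dx}{x}&=\frac{1}{\mu}\int_0^1 s^{\frac{\gamma-1}{\mu}-1}|g(s)|^r\,ds,\\
\int_0^1 \frac{|f|^q}{(\log\frac{e}{x})^\beta}\frac{dx}{x}&=\frac{1}{\mu}\int_0^1 s^{\frac{\beta-1}{\mu}-1}|g(s)|^q\,ds,\\
\int_0^1 \frac{x^p|f'|^p}{(\log\frac{e}{x})^\alpha}\frac{dx}{x}&=\mu^{p-1}\int_0^1 s^{\,p-1+\frac{\alpha+p-1}{\mu}}|g'(s)|^p\,ds.
\end{align*}
These are the one-dimensional CKN quantities on $B_1=(-1,1)$, applied to the even extension of $g$, with exponents
\[
\gamma'=\tfrac{\gamma-1}{\mu}-1,\quad \beta'=\tfrac{\beta-1}{\mu}-1,\quad \alpha'=p-1+\tfrac{\alpha+p-1}{\mu}.
\]
The factor $2$ from the reflection and the powers of $\mu$ only affect the constant $C$.

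Next we check the hypotheses of Proposition \ref{Prop CKN ball} with $n=1$.
\begin{itemize}
\item Since $\beta,\gamma>1$, we have $\beta',\gamma'>-1$ for every $\mu>0$.
\item We need $\alpha'>-1$, i.e.\ $p+\frac{\alpha+p-1}{\mu}>0$. This may fail for $\mu=1$ when $\alpha\le 1-2p$, which is why $\mu$ is introduced: we choose $\mu$ large enough.
\item The balance quantities become
\[
\frac{\gamma'+1}{r}=\frac{1}{\mu}\frac{\gamma-1}{r},\qquad \frac{\alpha'+1-p}{p}=\frac{1}{\mu}\frac{\alpha-1+p}{p},\qquad \frac{\beta'+1}{q}=\frac{1}{\mu}\frac{\beta-1}{q}.
\]
So (\ref{dilation valance on B}) is exactly (i) multiplied by $\frac1\mu$.
\item The equality case in (\ref{valance at 0 on B}) occurs exactly when it occurs in (iii), so (\ref{valance at 0 on B}) follows from (iii).
\item (\ref{dilation at mid on B}) with $n=1$ reads $\frac1r\ge\frac ap+\frac{1-a}q-a$, which is (ii).
\end{itemize}
Hence Proposition \ref{Prop CKN ball} gives the inequality for $g$, and undoing the substitution gives (\ref{CCKN ineq n=1}).

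The remaining point, which I expect to be the only delicate step, is admissibility of the test function. We need the even extension $\tilde g(s)=g(|s|)$ to lie in $C_0^\infty(-1,1)$, or at least to be approximable in the three weighted norms.
\begin{itemize}
\item Near $s=1$ we have $x$ near $1$, where $f$ vanishes since $f\in C_0^\infty[0,1)$. So $g$ vanishes near $s=1$.
\item Near $s=0$ we have $x=e\cdot\exp(-s^{-1/\mu})$, which is flat at $s=0$. Hence $g(s)-f(0)$ and all derivatives of $g$ vanish to infinite order as $s\to0$.
\end{itemize}
So $\tilde g$ is $C^\infty$ across $0$ and compactly supported in $(-1,1)$. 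Should smoothness at $0$ cause trouble in some case, a mollification argument as in the proof of Proposition \ref{Prop critical case} would suffice instead.
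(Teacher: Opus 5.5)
Your proposal is correct and is essentially the paper's proof: your substitution $\log\frac{e}{x}=s^{-1/\mu}$ is the paper's transformation $\log\frac{eR}{x}=(R/y)^A$ with $A=1/\mu$ and $R=1$. It gives the same exponents $\alpha'=A(\alpha-1+p)+p-1$, $\beta'=A(\beta-1)-1$, $\gamma'=A(\gamma-1)-1$ and the same requirement $A(\alpha+p-1)+p>0$, after which the paper likewise applies the one-dimensional case of Proposition \ref{Prop CKN ball}. Your check that $g$ is flat at $s=0$, so that its even extension is smooth and admissible, fills in a detail the paper leaves implicit.
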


We show Theorem \ref{Thm CCKN n=1} by using Proposition \ref{Prop CKN ball} with $n=1$. If we consider the reflection, 
we get the following lemma from Proposition \ref{Prop CKN ball}. 

\begin{lemma}\label{lem CKN ball n=1}
Let $0< a \le 1, p, q \ge 1, r>0$ and $\alpha', \beta', \gamma' > -1$. If the following conditions hold:
\begin{align*}
    &{\rm (i)'}\quad 
    \frac{\gamma' +1}{r} \ge \frac{\alpha'+1 -p}{p} a \,+ \frac{\beta'+1}{q}(1-a),\\
    &{\rm (ii)'}\quad 
    \frac{1}{r}\ge \frac{a}{p} + \frac{1-a}{q} -a,\\
    &{\rm (iii)'} \quad 
    \frac{1}{r} \le \frac{a}{p}  + \frac{1-a}{q}
    \,\,\,\text{if} \,\,\, \frac{\alpha'+1 -p}{p}= \frac{\gamma'+1}{r}=\frac{\beta'+1}{q},
\end{align*}
then there exists $C>0$ such that for any functions $g \in C_0^\infty [0,R)$, the following inequality holds.
\begin{align}\label{CKN ineq on B n=1}
    C \left( \int_0^R y^{\gamma'} \, |g(y)|^r \,dy \right)^{\frac{1}{r}}
    \le \left( \int_0^R y^{\alpha'} \, |g'(y)|^p \,dy \right)^{\frac{a}{p}} \left( \int_0^R y^{\beta'} \, |g(y)|^q \,dy \right)^{\frac{1-a}{q}}
\end{align}
\end{lemma}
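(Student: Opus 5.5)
The plan is to obtain the lemma directly from Proposition \ref{Prop CKN ball} with $n=1$ by an even reflection. With $n=1$ we have $B_R=(-R,R)$, and the conditions (i)'--(iii)' are exactly (\ref{dilation valance on B})--(\ref{valance at 0 on B}) for $n=1$ and exponents $(\alpha',\beta',\gamma')$; note that $a/n=a$. The standing hypotheses $\alpha',\beta',\gamma'>-1=-n$ are also those of Proposition \ref{Prop CKN ball}. Hence (\ref{CKN ineq on B}) holds on $(-R,R)$ for every $u\in C_0^\infty(-R,R)$ with some constant $C>0$.

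Given $g\in C_0^\infty[0,R)$, set $G(x)=g(|x|)$ for $x\in(-R,R)$. Every integral in (\ref{CKN ineq on B}) for $G$ is exactly twice the corresponding integral over $[0,R)$ for $g$, because $|x|^{\alpha'}|G'(x)|^p=|x|^{\alpha'}|g'(|x|)|^p$ almost everywhere. The inequality for $G$ therefore becomes
\begin{align*}
2^{\frac{1}{r}-\frac{a}{p}-\frac{1-a}{q}}\, C \left( \int_0^R y^{\gamma'} |g|^r\,dy\right)^{\frac1r}
\le \left( \int_0^R y^{\alpha'} |g'|^p\,dy\right)^{\frac ap}\left( \int_0^R y^{\beta'} |g|^q\,dy\right)^{\frac{1-a}{q}},
\end{align*}
and the power of $2$ is absorbed into the constant. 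This yields (\ref{CKN ineq on B n=1}).

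The only genuine issue, and the step I expect to need care, is that $G$ need not be smooth at $0$, since $g'(0)$ may be nonzero. It is, however, Lipschitz with compact support in $(-R,R)$. I will handle this as in the proof of Proposition \ref{Prop critical case}: mollify with $\eta_\delta$ and set $G_\delta=\eta_\delta*G$, which lies in $C_0^\infty(-R,R)$ for small $\delta$. Then $G_\delta\to G$ uniformly, $G_\delta'=\eta_\delta*G'$ is uniformly bounded and converges to $G'$ at every $x\neq0$, and all supports lie in a fixed compact subset of $(-R,R)$.

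Because $\alpha',\beta',\gamma'>-1$, the weights $|x|^{\alpha'},|x|^{\beta'},|x|^{\gamma'}$ are integrable on that compact set. Dominated convergence therefore lets us pass to the limit $\delta\to0$ in each of the three integrals. Applying (\ref{CKN ineq on B}) to $G_\delta$ and letting $\delta\to0$ gives it for $G$, and hence (\ref{CKN ineq on B n=1}).
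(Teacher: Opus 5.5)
Your proposal is correct and follows the paper's route. The paper simply says the lemma follows from Proposition~\ref{Prop CKN ball} with $n=1$ by reflection, which is exactly your even extension $G(x)=g(|x|)$, with the factor $2^{\frac1r-\frac ap-\frac{1-a}{q}}$ absorbed into the constant; your mollification and dominated-convergence step, using $\alpha',\beta',\gamma'>-1$ and the boundedness of $G$ and $G'$, supplies the detail the paper leaves implicit, namely that $G$ is only Lipschitz at the origin.
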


\begin{proof}[Proof of Theorem \ref{Thm CCKN n=1}]
Let $A>0$ satisfy $A(\alpha +p-1) +p >0$. 
Consider the following transformation.
\begin{align*}
    f(x)=g(y),\,\,\text{where}\,\, \log \frac{eR}{x}= \left( \frac{R}{y} \right)^A
\end{align*}
Then we see that
\begin{align*}
    \int_0^R \frac{x^p \,|f'(x)|^p}{(\log \frac{eR}{x})^\alpha} \frac{dx}{x}
    &=A^{1-p} R^{-A (\alpha -1+p)} \int_0^R y^{\alpha'} \, |g'(y)|^p \,dy,\\
    \int_0^R \frac{|f(x)|^q}{(\log \frac{eR}{x})^\beta} \frac{dx}{x} &= A R^{-A(\beta -1)}\int_0^R y^{\beta'} \, |g(y)|^q \,dy,\\
    \int_0^R \frac{|f(x)|^r}{(\log \frac{eR}{x})^\gamma} \frac{dx}{x} &= A R^{-A(\gamma -1)} \int_0^R y^{\gamma'} \, |g(y)|^r \,dy,
\end{align*}
where
\begin{align*}
    \alpha' &= A(\alpha -1+p) +p -1 >-1,\\
    \beta' &= A(\beta -1) -1 >-1,\\
    \gamma' &= A(\gamma -1) -1>-1.
\end{align*}
    Since $(a, p, q, r, \alpha, \beta, \gamma)$ satisfies (i)-(iii) in Theorem \ref{Thm CCKN n=1}, $(a, p, q, r,\alpha', \beta', \gamma')$ satisfies (i)'-(iii)' in Lemma \ref{lem CKN ball n=1}. Therefore, we can derive (\ref{CCKN ineq n=1}) from (\ref{CKN ineq on B n=1}).
\end{proof}

%
%
\section{Sufficiency part of Theorem \ref{Thm CCKN} when $a<1$ and $n \ge 2$}\label{S a<1}

%
%
\subsection{The case where $\frac{1}{r} \le \frac{a}{p}+ \frac{1-a}{q}$}\label{S a<1 n>1}

In this section, we consider the case where $a<1, n \ge 2, \frac{1}{r} \le \frac{a}{p}+ \frac{1-a}{q}$ and show the inequalities (\ref{CCKN ineq}) in Theorem \ref{Thm CCKN} by using assumptions (\ref{n-dilation valance})-(\ref{n-valance at 0}). The strategy of the proof is the same as it of Theorem \ref{Thm a=1 CCKN} in \S \ref{S a=1}. 

\begin{proof}[Proof of Theorem \ref{Thm CCKN}(Sufficiency when $\frac{1}{r} \le \frac{a}{p}+ \frac{1-a}{q}$)]
    We assume $a<1, n \ge 2, R=1$ and $\frac{1}{r} \le \frac{a}{p}+ \frac{1-a}{q}$.  

\noindent
{\bf [Step 1: Radial functions]} First, we will show the inequalities (\ref{CCKN ineq}) for any radial functions. Since the condition (\ref{n-valance at mid}) implies the condition (ii) in Theorem \ref{Thm CCKN n=1}, we can derive (\ref{CCKN ineq}) for radial functions from one-dimensional inequalities (\ref{CCKN ineq n=1}).   

\noindent
{\bf [Step 2: Spherical mean zero functions]} Let $A_k$ be the annulus given in the proof of Theorem \ref{Thm a=1 CCKN}. 
Similarly, we claim that the inequalities (\ref{CCKN ineq}) hold on the annulus $A_k$ for any spherical mean zero functions as follows. We will show Lemma \ref{lem:mean zero} later. 

\begin{lemma}\label{lem:mean zero}
Suppose the condition (\ref{n-valance at mid}) in Theorem \ref{Thm CCKN}. Then 
    there exists $C>0$ such that for any $k \in \N$ and any $u \in C^\infty (B_1)$ with $\int_{\mathbb{S}^{n-1}} u(t\w) \,dS_\w =0$ for any $t \in [0, 1)$, 
    the following inequality holds.
    \begin{align}\label{CCKN A_k gene}
     \int_{A_k} \frac{|u(x)|^r }{|x|^n (\log \frac{e}{|x|})^\gamma} \,dx
    \le C  \left( \int_{A_k} \frac{|x|^p \,|\nabla u(x) |^p}{|x|^n (\log \frac{e}{|x|})^{\alpha}} \,dx \right)^{\frac{r}{p}a} \left( \int_{A_k} \frac{|u(x) |^q}{|x|^n (\log \frac{e}{|x|})^{\beta}} \,dx \right)^{\frac{r}{q}(1-a)}
\end{align}
\end{lemma}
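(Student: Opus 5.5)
I will follow the proof of Lemma \ref{lem:mean zero a=1}: first prove (\ref{CCKN A_k gene}) on the fixed annulus $A_1$, then transport it to $A_k$ by the nonlinear scaling $y=e^{1-\la_k}|x|^{\la_k-1}x$, $\la_k=2^{-k+1}$.

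\textbf{The fixed annulus $A_1$.} On $A_1$ all weights $|x|^{-n}(\log\frac{e}{|x|})^{-\gamma}$, $|x|^{p-n}(\log\frac{e}{|x|})^{-\alpha}$ and $|x|^{-n}(\log\frac{e}{|x|})^{-\beta}$ are bounded above and below by positive constants. So it suffices to show the unweighted Gagliardo--Nirenberg inequality
\begin{align*}
\|u\|_{L^r(A_1)}\le C\|\nabla u\|_{L^p(A_1)}^{a}\|u\|_{L^q(A_1)}^{1-a}\qquad\text{for } u\in C^\infty(\overline{A_1}),\ \int_{A_1}u\,dx=0 .
\end{align*}
The mean zero condition follows from the spherical mean zero condition, as in the proof of Lemma \ref{lem:mean zero a=1}.

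This is the Gagliardo--Nirenberg inequality with mean zero on a bounded Lipschitz domain, whose outline is given in the Appendix (\S \ref{S App}). It holds under the translation/midpoint balance (\ref{n-valance at mid}). Some care is needed when $\frac1r$ is small compared with $\frac{a}{p}+\frac{1-a}{q}$, since then the exponent identity $\frac1r=\frac{a}{p}+\frac{1-a}{q}-\frac{\theta a}{n}$ forces an interpolation parameter $\theta\in[0,1]$. The intended route is:
\begin{itemize}
\item extend or use Sobolev on $A_1$ to get $\|u\|_{L^{s}}\lesssim\|u\|_{W^{1,p}}$;
\item replace $\|u\|_{W^{1,p}}$ by $\|\nabla u\|_{L^p}$ using Poincar\'e for mean zero functions;
\item interpolate by H\"older between $L^q$ and $L^{s}$, or between $L^q$ and $L^\infty$ when $p>n$.
\end{itemize}
In the current subsection we also have $\frac1r\le\frac a p+\frac{1-a}{q}$. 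The case $r<q$ (possible when $a$ is small) also needs care: then I bound $\|u\|_{L^r(A_1)}\le C\|u\|_{L^{\tilde r}(A_1)}$ for a suitable larger $\tilde r$ on the bounded set and raise the exponents accordingly. I expect this $A_1$ step, namely checking that the GN-with-mean-zero inequality covers exactly the admissible range of $(a,p,q,r)$, to be the main obstacle, and I will defer it to the Appendix.

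\textbf{Scaling to $A_k$.} For $k\ge2$, set $u_k(y)=u(x)$. By the computation in \S\ref{S nonlinear}:
\begin{itemize}
\item the $r$-term scales exactly by $\la_k^{\gamma-1}$;
\item the $q$-term scales exactly by $\la_k^{\beta-1}$;
\item the gradient term satisfies $\int_{A_1}\frac{|y|^p|\nabla u_k|^p}{|y|^n(\log\frac{e}{|y|})^{\alpha}}dy\le\la_k^{-(\alpha-1+p)}\int_{A_k}\frac{|x|^p|\nabla u|^p}{|x|^n(\log\frac{e}{|x|})^{\alpha}}dx$.
\end{itemize}
The last point uses $\la_k\le1$: the angular part gets multiplied by $\la_k^{-2}\ge1$ in the original variables, so discarding it is the favorable direction.

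Also, $u_k$ is spherical mean zero on $A_1$. Applying the $A_1$ estimate to $u_k$ gives
\begin{align*}
\Big(\int_{A_k}\tfrac{|u|^r}{|x|^n(\log\frac e{|x|})^\gamma}\Big)^{\frac1r}\le C\,\la_k^{\frac{\gamma-1}{r}-\frac{\alpha-1+p}{p}a-\frac{\beta-1}{q}(1-a)}\Big(\int_{A_k}\tfrac{|x|^p|\nabla u|^p}{|x|^n(\log\frac e{|x|})^{\alpha}}\Big)^{\frac ap}\Big(\int_{A_k}\tfrac{|u|^q}{|x|^n(\log\frac e{|x|})^{\beta}}\Big)^{\frac{1-a}q}.
\end{align*}
The power of $\la_k$ is $\le1$ by (\ref{n-dilation valance}), since $\la_k\le1$ and the exponent is nonnegative. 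Hence the constant $C$ is uniform in $k$, which gives (\ref{CCKN A_k gene}). The case $k=0$ is handled directly, as on $A_1$.
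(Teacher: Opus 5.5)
Your architecture is the paper's: reduce to $A_1$ via $y=e^{1-\la_k}|x|^{\la_k-1}x$, use that all weights are comparable to constants on $A_1$, and appeal to a mean-zero Gagliardo--Nirenberg inequality. Your scaling computation is correct. You also rightly note that it consumes (\ref{n-dilation valance}), which the paper uses implicitly even though the lemma lists only (\ref{n-valance at mid}).

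The gap is the $A_1$ step. This is exactly where the paper's proof does its work, and you defer it. Proposition \ref{Prop GN} in \S\ref{S App} does not cover it: it gives only the single exponent $\frac{1}{\tilde r}=\frac{a}{p}-\frac{a}{n}+\frac{1-a}{q}$, and only when this number is positive. By contrast, (\ref{n-valance at mid}) is an inequality and allows $\frac{a}{p}-\frac{a}{n}+\frac{1-a}{q}\le 0$.

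Your sketched substitute (Sobolev or Morrey, then Poincar\'e, then H\"older between $L^q$ and $L^s$ or $L^\infty$) works for $p<n$. There, H\"older between $L^q$ and $L^{np/(n-p)}$ lands exactly on $\tilde r$. It fails beyond that. Poincar\'e only converts powers of $\|u\|_{L^q}$ into powers of $\|\nabla u\|_{L^p}$, never the reverse. Hence H\"older between $L^q$ and $L^s$ (resp.\ $L^\infty$) with the required powers reaches only $r<q/(1-a)$ (resp.\ $r\le q/(1-a)$). This misses the endpoint at $p=n$, for example $n=p=q=2$, $a=\frac12$, $r=4$: this is Ladyzhenskaya's inequality, with equality in (\ref{n-valance at mid}). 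For $p>n$ it misses all $r$ with $\frac{1-a}{q}>\frac1r\ge\frac{a}{p}-\frac{a}{n}+\frac{1-a}{q}$.

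What is missing is the case analysis that reduces everything to Proposition \ref{Prop GN}.

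\emph{Case (I).} If $\frac{a}{p}-\frac{a}{n}+\frac{1-a}{q}>0$, define $\tilde r$ by equality. Then (\ref{n-valance at mid}) gives $r\le\tilde r$, so H\"older on the bounded set $A_1$ followed by Proposition \ref{Prop GN} finishes.

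\emph{Case (II).} If $\frac{a}{p}-\frac{a}{n}+\frac{1-a}{q}\le 0$, then $p>n$ because $a<1$.
When $q\ge r$, write $\|u\|_{L^r(A_1)}\le C\|u\|_{L^q(A_1)}\le C\|u\|_{L^\infty(A_1)}^{a}\|u\|_{L^q(A_1)}^{1-a}$, then use Morrey and Poincar\'e.
When $q<r$, put $\tilde a=\frac{1/q-1/r}{1/q+1/n-1/p}$, so that $\frac1r=\frac{\tilde a}{p}-\frac{\tilde a}{n}+\frac{1-\tilde a}{q}>0$. Condition (\ref{n-valance at mid}) gives $0<\tilde a\le a<1$. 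Apply Proposition \ref{Prop GN} with $\tilde a$, and absorb the surplus factor via $\|u\|_{L^q}^{a-\tilde a}\le C\|\nabla u\|_{L^p}^{a-\tilde a}$, again by Morrey and Poincar\'e.

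With this inserted, your scaling step completes the proof as in the paper.
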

Note that if $k=0$, then we also obtain the inequalities (\ref{CCKN A_k gene}) on $A_0$ for any spherical mean zero functions $u \in C^\infty (B_1)$ in the same way. 
If we get Lemma \ref{lem:mean zero}, then we get the inequalities (\ref{CCKN ineq}) on $B_1$ for any spherical mean zero functions $u$ as follows.
\begin{align*}
\int_{B_1} \frac{|u(x)|^r }{|x|^n (\log \frac{e}{|x|})^\gamma} \,dx
&= \sum_{k=0}^\infty
     \int_{A_k} \frac{|u(x)|^r }{|x|^n (\log \frac{e}{|x|})^\gamma} \,dx\\
    &\le C  \sum_{k=0}^\infty \left( \int_{A_k} \frac{|x|^p \,|\nabla u(x) |^p}{|x|^n (\log \frac{e}{|x|})^{\alpha}} \,dx \right)^{\frac{r}{p}a} \left( \int_{A_k} \frac{|u(x) |^q}{|x|^n (\log \frac{e}{|x|})^{\beta}} \,dx \right)^{\frac{r}{q}(1-a)}\\
    &\le C  \left( \int_{B_1} \frac{|x|^p \,|\nabla u(x) |^p}{|x|^n (\log \frac{e}{|x|})^{\alpha}} \,dx \right)^{\frac{r}{p}a} \left( \int_{B_1} \frac{|u(x) |^q}{|x|^n (\log \frac{e}{|x|})^{\beta}} \,dx \right)^{\frac{r}{q}(1-a)}
\end{align*}
In the last inequality, we used $\frac{1}{r} \le \frac{a}{p}+ \frac{1-a}{q}$ and the inequality:
\begin{align*}
    \sum_{k=0}^\infty |a_k|^c |b_k|^d 
    \le \left( \,\sum_{k=0}^\infty |a_k| \,\right)^c \left( \,\sum_{k=0}^\infty |b_k| \,\right)^d < \infty 
\end{align*}
for $c+d \ge 1, c, d \ge 0$ and $\{ a_k \}_{k=0}^\infty, \{ b_k \}_{k=0}^\infty \subset \re$. 
Therefore, we get the inequality (\ref{CCKN ineq}) for any spherical mean zero functions $u \in C^\infty (B_1)$.

\noindent
{\bf [Step 3: Any functions]} Finally, we will show (\ref{CCKN ineq}) for any functions $u \in C_0^\infty (B_1)$ by using [Step 1] and [Step 2]. 
For $u$, we consider its spherical average
\begin{align*}
    U(x)=U(t)=\frac{1}{\w_n} \int_{\mathbb{S}^{n-1}} u(t\w) \,dS_\w \quad (t=|x|)
\end{align*}
In the similar way to the proof of Theorem \ref{Thm a=1 CCKN}, we have
\begin{align*}
    \int_{B_1} \frac{|x|^p \,|\nabla U |^p}{|x|^n (\log \frac{e}{|x|})^{\alpha}} \,dx 
    &\le \int_{B_1} \frac{|x|^p \,|\nabla u |^p}{|x|^n (\log \frac{e}{|x|})^{\alpha}} \,dx,\\
    \int_{B_1} \frac{| U |^q}{|x|^n (\log \frac{e}{|x|})^{\beta}} \,dx 
    &\le \int_{B_1} \frac{| u |^q}{|x|^n (\log \frac{e}{|x|})^{\beta}} \,dx. 
\end{align*}
Since $u-U$ is a spherical mean zero function, we can apply [Step 2] for $u-U$. Thus, by using the inequality in $L^r$: 
\begin{align*}
    \| f+g \|_r \le C_r \, (\| f \|_r + \| g \|_r), \,\,\text{where} \,\, C_r = \begin{cases}
        1 \quad &(r \ge 1),\\
        2^{\frac{1}{r}-1} &(0< r < 1)
    \end{cases}
\end{align*}
and the triangle inequality in $L^p$ and $L^q$, 
we have
\begin{align*}
     &\left( \int_{B_1} \frac{|u|^r \,dx}{|x|^n (\log \frac{e}{|x|})^\gamma} \right)^{\frac{1}{r}} - C_r\left( \int_{B_1} \frac{|U|^r \,dx}{|x|^n (\log \frac{e}{|x|})^\gamma} \right)^{\frac{1}{r}}
     \le C_r \left( \int_{B_1} \frac{|u-U|^r \,dx}{|x|^n (\log \frac{e}{|x|})^\gamma} \right)^{\frac{1}{r}} \\
    &\le C  \left( \int_{B_1} \frac{|x|^p \,|\nabla u- \nabla U |^p\,dx}{|x|^n (\log \frac{e}{|x|})^{\alpha}}  \right)^{\frac{a}{p}}
    \left( \int_{B_1} \frac{| u-U |^q}{|x|^n (\log \frac{e}{|x|})^{\beta}} \,dx \right)^{\frac{1-a}{q}}\\
    &\le C \left( \int_{B_1} \frac{|x|^p \,|\nabla u |^p\,dx}{|x|^n (\log \frac{e}{|x|})^{\alpha}}  \right)^{\frac{a}{p}}
    \left( \int_{B_1} \frac{| u|^q}{|x|^n (\log \frac{e}{|x|})^{\beta}} \,dx \right)^{\frac{1-a}{q}}.
\end{align*}
Therefore, using [Step 1] for $U$, we have
\begin{align*}
    &\left( \int \frac{|u|^r}{|x|^n (\log \frac{e}{|x|})^\gamma} \right)^{\frac{1}{r}} 
    \le C \left( \int \frac{|x|^p \,|\nabla u |^p}{|x|^n (\log \frac{e}{|x|})^{\alpha}}  \right)^{\frac{a}{p}} \left( \int \frac{| u|^q}{|x|^n (\log \frac{e}{|x|})^{\beta}} \right)^{\frac{1-a}{q}}+ C_r \left( \int\frac{|U|^r }{|x|^n (\log \frac{e}{|x|})^\gamma} \right)^{\frac{1}{r}}\\
    &\le C \left\{ \left( \int \frac{|x|^p|\nabla u |^p}{|x|^n (\log \frac{e}{|x|})^{\alpha}}  \right)^{\frac{a}{p}} \left( \int \frac{| u|^q }{|x|^n (\log \frac{e}{|x|})^{\beta}} \right)^{\frac{1-a}{q}}
    +  \left( \int \frac{|x|^p |\nabla U |^p}{|x|^n (\log \frac{e}{|x|})^{\alpha}} \right)^{\frac{a}{p}} \left( \int \frac{| U|^q}{|x|^n (\log \frac{e}{|x|})^{\beta}} \right)^{\frac{1-a}{q}} \right\}\\
    &\le C \left( \int_{B_1} \frac{|x|^p \,|\nabla u |^p\,dx}{|x|^n (\log \frac{e}{|x|})^{\alpha}}  \right)^{\frac{a}{p}} \left( \int_{B_1} \frac{| u|^q}{|x|^n (\log \frac{e}{|x|})^{\beta}} \,dx \right)^{\frac{1-a}{q}}. 
\end{align*}
Therefore, we get the inequalities (\ref{CCKN ineq}). 
\end{proof}

To show Lemma \ref{lem:mean zero}, we just use the Gagliardo-Nirenberg inequality below instead of the Sobolev inequality.  

\begin{prop}\label{Prop GN}(The Gagliardo-Nirenberg inequality with mean zero)\\
  Let $n \ge 2, A$ be a domain in $\re^n$ with smooth bounded boundary and $p,q \ge 1, \tilde{r} >0, a\in(0,1)$ satisfy 
   \begin{align*}
    \frac{1}{\tilde{r}}=\frac{a}{p} - \frac{a}{n} +\frac{1-a}{q}. 
   \end{align*}
   Then there exists $C>0$ such that for any functions $u\in C^{\infty}(A)$ with $\int_A u(y)\,dy =0$, the following inequality holds.
   \begin{align}\label{GN gene}
       \left(\int_{A}|u(x)|^{\tilde{r}} \,dx\right)^{\frac{1}{\tilde{r}}}\le C\left(\int_{A}|\nabla u(x)|^p\,dx\right)^{\frac{a}{p}}\left(\int_{A}|u(x)|^{q}\,dx\right)^{\frac{1-a}{q}}
   \end{align}
\end{prop}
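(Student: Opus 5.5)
We will prove Proposition \ref{Prop GN} by combining the standard Gagliardo--Nirenberg inequality on a bounded smooth domain (stated with the full $W^{1,p}$ norm) with the Poincar\'e inequality for mean zero functions. This is the same mechanism used in the proof of Lemma \ref{lem:mean zero a=1}, where the Sobolev inequality and Poincar\'e's inequality gave (\ref{A_1}).

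\textbf{Step 1: the inhomogeneous inequality.} Since $\partial A$ is smooth and bounded, there is a bounded extension operator $E$ that is simultaneously bounded $W^{1,p}(A)\to W^{1,p}(\re^n)$ and $L^q(A)\to L^q(\re^n)$. The standard choice is Stein's extension, or a reflection-type extension with a cut-off. Applying the Gagliardo--Nirenberg inequality on $\re^n$ (Theorem A with $\alpha=\beta=\gamma=0$; the balance condition holds exactly by the assumption on $\tilde r$) to $Eu$, and restricting back to $A$, we get
\begin{align*}
\|u\|_{L^{\tilde r}(A)} \le C \left( \|\nabla u\|_{L^p(A)} + \|u\|_{L^p(A)} \right)^{a} \|u\|_{L^q(A)}^{1-a}.
\end{align*}
If $\tilde r$ lies in the range where the whole-space statement needs $\tilde r$ finite (for instance $p\ge n$), we note that the relation defining $\tilde r$ with $a<1$ still gives a finite $\tilde r$. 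The range $\tilde r<1$ is covered by Theorem A, which allows $r>0$.

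\textbf{Step 2: removing the lower order term.} For $u$ with $\int_A u\,dy=0$, the Poincar\'e inequality on the bounded connected Lipschitz domain $A$ \cite{Evans} gives $\|u\|_{L^p(A)}\le C\|\nabla u\|_{L^p(A)}$. Inserting this into Step 1 yields (\ref{GN gene}) directly. Throughout we may assume $u\in W^{1,p}(A)\cap L^q(A)$; otherwise the right-hand side is infinite and there is nothing to prove.

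\textbf{Main obstacle.} The delicate point is Step 1. One needs an extension operator that controls both the $W^{1,p}$ and the $L^q$ norms with the same $E$, because otherwise the product structure on the right-hand side is lost. Stein's universal extension handles this, and so does a local reflection across the smooth boundary: reflection preserves $L^q$ norms for every $q$ and $W^{1,p}$ norms up to constants, and the cut-off only introduces the lower order term $\|u\|_{L^p}$, which Step 2 then absorbs.

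A second option, avoiding extension altogether, is to argue by contradiction and compactness: normalize $\|\nabla u_j\|_p^a\|u_j\|_q^{1-a}\to0$ with $\|u_j\|_{\tilde r}=1$ and mean zero, then use Rellich to reach a contradiction. This route needs more care because of homogeneity, so we use the extension argument as the main route. Since the paper only needs this for annuli such as $A_1$, one may alternatively reflect explicitly in the two spheres bounding the annulus.
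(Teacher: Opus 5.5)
Your proposal is correct and is essentially the paper's argument. The mean-zero Poincar\'e inequality reduces everything to $\|u\|_{L^{\tilde r}(A)}\le C\|u\|_{W^{1,p}(A)}^a\|u\|_{L^q(A)}^{1-a}$, which follows from the whole-space Gagliardo--Nirenberg inequality applied to $Eu$ for a single extension operator $E$ bounded simultaneously on $W^{1,p}$ and $L^q$ (Proposition \ref{Prop extension}). One cosmetic difference: the paper cites the classical whole-space inequality directly rather than Theorem A. That is slightly cleaner, since CKN's proof of Theorem A itself relies on the mean-zero inequality being proved here.
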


In \cite{CKN}, they used Proposition \ref{Prop GN} on annulus to show Theorem A by referring \cite{G, N}. However, we cannot find an exact proof of the Gagliardo-Nirenberg inequality with mean zero in \cite{G, N}. Thus, we will give the outline of a proof of Proposition \ref{Prop GN} in Appendix.


\begin{proof}[Proof of Lemma \ref{lem:mean zero}] 
Thanks to the nonlinear scaling argument used in the proof of Lemma \ref{lem:mean zero a=1}, it is enough to show the inequalities (\ref{CCKN A_k gene}) only in $A_1$. Furthermore, every logarithmic weight in (\ref{CCKN A_k gene}) is bounded in $A_1$. Thus, it is enough to show  
\begin{align}\label{GN on A_1}
    \left(\int_{A_1}|u(x)|^{r} \,dx\right)^{\frac{1}{r}}\le C\left(\int_{A_1}|\nabla u(x)|^p\,dx\right)^{\frac{a}{p}}\left(\int_{A_1}|u(x)|^{q}\,dx\right)^{\frac{1-a}{q}}
\end{align}
for spherical mean zero functions $u \in C^{\infty}(A_1)$. Note that spherical mean zero function in $A_1$ is especially mean zero function in $A_1$.  

\noindent
{\bf [(I)\,The case: $\frac{a}{p} -\frac{a}{n} + \frac{1-a}{q} >0$]} Let $\tilde{r} >0$ satisfy
$$\frac{1}{\tilde{r}}=\frac{a}{p} - \frac{a}{n} +\frac{1-a}{q}.$$
By using (\ref{n-valance at mid}), we see that $\tilde{r} \ge r$. Thus, by using the H\"older inequality and Proposition \ref{Prop GN}, we have 
\begin{align*}
    \left(\int_{A_1}|u|^{r} \,dx\right)^{\frac{1}{r}}\le C \left(\int_{A_1}|u|^{\tilde{r}} \,dx\right)^{\frac{1}{\tilde{r}}}\le C\left(\int_{A_1}|\nabla u(x)|^p\,dx\right)^{\frac{a}{p}}\left(\int_{A_1}|u|^{q}\,dx\right)^{\frac{1-a}{q}}
\end{align*}

\noindent
{\bf [(II)-(i)\,The case: $\frac{a}{p} -\frac{a}{n} + \frac{1-a}{q} \le 0$ and $q \ge r$]} Since $\frac{1-a}{q} >0$, we see $p>n$. Thus, by using the H\"older inequality, the Morrey inequality and the Poincar\'e inequality with mean zero, we have
\begin{align*}
    \left(\int_{A_1}|u|^{r} \,dx\right)^{\frac{1}{r}}
    &\le C \left(\int_{A_1}|u|^{q} \,dx\right)^{\frac{a}{q} + \frac{1-a}{q}}
    \le C \, \| u \|_{L^\infty (A_1)}^a \left(\int_{A_1}|u|^{q} \,dx\right)^{\frac{1-a}{q}}\\
    &\le C \| u \|_{W^{1,p} (A_1)}^a \left(\int_{A_1}|u|^{q} \,dx\right)^{\frac{1-a}{q}}
    \le C \left(\int_{A_1}|\nabla u|^{p} \,dx\right)^{\frac{a}{p}} \left(\int_{A_1}|u|^{q} \,dx\right)^{\frac{1-a}{q}}.
\end{align*}

\noindent
{\bf [(II)-(ii)\,The case: $\frac{a}{p} -\frac{a}{n} + \frac{1-a}{q} \le 0$ and $q<r$]} Note that $p>n$. Let $\tilde{a} >0$ satisfy
$$\tilde{a} = \frac{\frac{1}{q} - \frac{1}{r}}{\frac{1}{q} + \frac{1}{n} - \frac{1}{p}},\quad \text{that is},\quad  \frac{1}{r} = \frac{\tilde{a}}{p} -\frac{\tilde{a}}{n} + \frac{1-\tilde{a}}{q}.$$
By using (\ref{n-valance at mid}), we see that
\begin{align*}
    \frac{\tilde{a}}{p} -\frac{\tilde{a}}{n} + \frac{1-\tilde{a}}{q} \ge \frac{a}{p} -\frac{a}{n} + \frac{1-a}{q},\quad \text{that is},\quad 0 \ge (a-\tilde{a}) \left( \frac{1}{p}- \frac{1}{n} - \frac{1}{q} \right). 
\end{align*}
which implies $\tilde{a} \le a <1$. By using Proposition \ref{Prop GN}, the Morrey inequality and the Poincar\'e inequality with mean zero, we have
\begin{align*}
    \left(\int_{A_1}|u|^{r} \,dx\right)^{\frac{1}{r}} &\le C \left(\int_{A_1}|\nabla u|^{p} \,dx\right)^{\frac{\tilde{a}}{p}} \left(\int_{A_1}|u|^{q} \,dx\right)^{\frac{1-a}{q} + \frac{a-\tilde{a}}{q}}\\
    &\le C \left(\int_{A_1}|\nabla u|^{p} \,dx\right)^{\frac{a}{p}} \left(\int_{A_1}|u|^{q} \,dx\right)^{\frac{1-a}{q}}. 
\end{align*}
Therefore, we get (\ref{GN on A_1}). 
\end{proof}

\subsection{The case where $\frac{1}{r} > \frac{a}{p}+ \frac{1-a}{q}$}\label{S a<1 n>1 last}

In this section, we consider the case where $a<1, n \ge 2, \frac{1}{r} > \frac{a}{p}+ \frac{1-a}{q}$ and show the inequalities (\ref{CCKN ineq}) by using assumptions (\ref{n-dilation valance})-(\ref{n-valance at 0}), \S \ref{S a=1} and \S \ref{S a<1 n>1}.

\begin{proof}[Proof of Theorem \ref{Thm CCKN}(Sufficiency when $\frac{1}{r} > \frac{a}{p}+ \frac{1-a}{q}$)] 
Note that in \S \ref{S a<1 n>1}, we used the assumption $\frac{1}{r} \le \frac{a}{p} + \frac{1-a}{q}$ only when we show [Step 2]. Thus it is enough to show the inequalities (\ref{CCKN ineq}) for any spherical mean zero functions when $\frac{1}{r} > \frac{a}{p}+ \frac{1-a}{q}$. 
For spherical mean zero function $u \in C^\infty (B_1)$, we set
\begin{align*}
    A(u)= \left( \int_{B_1} \frac{|x|^p |\nabla u|^p}{|x|^n (\log \frac{e}{|x|})^\alpha}\,dx \right)^{\frac{1}{p}} >0\quad \text{and}\quad  B(u)= \left( \int_{B_1} \frac{|u|^q}{|x|^n (\log \frac{e}{|x|})^\beta}\,dx \right)^{\frac{1}{q}} >0.
\end{align*}
From (\ref{n-dilation valance}), (\ref{n-valance at 0}) and $\frac{1}{r} > \frac{a}{p}+ \frac{1-a}{q}$, we can also assume 
$$\frac{\alpha -1+p}{p} \not= \frac{\beta -1}{q}\quad {\rm or}\quad \frac{\gamma -1}{r} > \frac{\alpha -1+p}{p} = \frac{\beta -1}{q}.$$ 

\noindent
{\bf [(I)\,The case: $\frac{\alpha -1+p}{p} > \frac{\beta -1}{q}$]} 
For $b \in (0,a)$, we set $\delta$ and $s$ as follows.
\begin{align*}
    \frac{1}{s} = \frac{b}{p} + \frac{1-b}{q} \quad \text{and}\quad \frac{\delta -1}{s} = \frac{\alpha -1 + p}{p} b + \frac{\beta -1}{q} (1-b)
\end{align*}
Then we see that 
\begin{align}\label{s delta}
    0< \frac{\delta -1}{s} < \frac{\gamma -1}{r} \quad \text{and}\quad s>r \,\,\,\text{for} \,\,b\,\,\text{close to}\,\,a. 
\end{align}
We divide two cases as follows. 

\noindent
{\bf [(I)-(i)\,The case: $B(u) \le A(u)$]} By using the H\"older inequality, (\ref{s delta}), \S \ref{S a<1 n>1}, $b<a$ and $B(u) \le  A(u)$, we have
\begin{align*}
    \left( \int_{B_1} \frac{|u|^r \,dx}{|x|^n (\log \frac{e}{|x|})^\gamma} \right)^{\frac{1}{r}}
    &\le \left( \int_{B_1} \frac{|u|^s \,dx}{|x|^n (\log \frac{e}{|x|})^\delta} \right)^{\frac{1}{s}} \left( \int_{B_1} \frac{dx}{|x|^n (\log \frac{e}{|x|})^{\frac{\gamma s -r \delta}{s-r}}} \right)^{\frac{1}{r}-\frac{1}{s}}\\
    &\le C A(u)^b B(u)^{1-b} \le C A(u)^{a} B(u)^{1-a} 
\end{align*}

\noindent
{\bf [(I)-(ii)\,The case: $B(u)>  A(u)$]}
Consider the scaled function $v$ which is given by
\begin{align*}
    v(y) = \frac{u(x)}{D}, \,\text{where}\,\, y= \left( \frac{|x|}{e} \right)^{\frac{1}{\la}-1} x,\, \la^{\frac{\alpha -1+p}{p} -\frac{\beta -1}{q}} = \frac{B(u)}{A(u)} >1\,\, \text{and}\,\,D=\la^{\frac{\beta -1}{q}} B(u). 
\end{align*}
Note that $\la >1$ and $v \in C^\infty (B_{e^{1-\frac{1}{\la}}} \setminus \{ 0\}) \cap C(B_{e^{1-\frac{1}{\la}}})$ is also a spherical mean zero function. 
By similar calculations to them in \S \ref{S nonlinear}, we see that
\begin{align}\label{v and u}
    \int_{B_{e^{1-\frac{1}{\la}}}} \frac{|y|^p |\nabla v|^p\,dy}{|y|^n (\log \frac{e}{|y|})^\alpha} \le \frac{\la^{\alpha -1+p} A(u)^p}{D^p} =1,\int_{B_{e^{1-\frac{1}{\la}}}} \frac{|v|^q\,dy}{|y|^n (\log \frac{e}{|y|})^\beta} = \frac{\la^{\beta -1} B(u)^q}{D^q} =1.
\end{align}
By using the H\"older inequality, (\ref{s delta}), \S \ref{S a<1 n>1}, (\ref{v and u}) and (\ref{n-dilation valance}), we have
\begin{align}\label{lambda 1}
    \left( \int_{B_{e^{1-\lambda}}} \frac{|u|^r \,dx}{|x|^n (\log \frac{e}{|x|})^\gamma} \right)^{\frac{1}{r}}
    &= D \la^{-\frac{\gamma -1}{r}} \left( \int_{B_1} \frac{|v|^r \,dy}{|y|^n (\log \frac{e}{|y|})^\gamma} \right)^{\frac{1}{r}} \notag \\
    &\le C D\la^{-\frac{\gamma -1}{r}} \left( \int_{B_1} \frac{|v|^s \,dy}{|y|^n (\log \frac{e}{|y|})^\delta} \right)^{\frac{1}{s}} \notag \\
    &\le C D\la^{-\frac{\gamma -1}{r}}  A(v)^b B(v)^{1-b} \le C D\la^{-\frac{\gamma -1}{r}} \notag   \\
    &\le CD \la^{-\frac{\alpha -1+p}{p}a - \frac{\beta -1}{q}(1-a)} = C A(u)^a B(u)^{1-a}. 
\end{align}
On the other hand, for $d \in (a, 1)$, we set $\ep$ and $t$ as follows.
\begin{align*}
    \frac{1}{t} = \frac{d}{p} + \frac{1-d}{q} \quad \text{and}\quad \frac{\ep -1}{t} = \frac{\alpha -1 + p}{p} d + \frac{\beta -1}{q} (1-d)
\end{align*}
Then we see that 
\begin{align}\label{t ep}
     \frac{\ep -1}{t} > \frac{\gamma -1}{r} \quad \text{and}\quad t>r \,\,\,\text{for} \,\,d\,\,\text{close to}\,\,a.
\end{align}
By using the H\"older inequality, (\ref{t ep}), \S \ref{S a<1 n>1}, (\ref{v and u}) and (\ref{n-dilation valance}), we have
\begin{align}\label{lambda 2}
    &\left( \int_{B_1 \setminus B_{e^{1-\la}}} \frac{|u|^r \,dx}{|x|^n (\log \frac{e}{|x|})^\gamma} \right)^{\frac{1}{r}}
    = D \la^{-\frac{\gamma -1}{r}} \left( \int_{B_{e^{1-\frac{1}{\la}}} \setminus B_1} \frac{|v|^r \,dy}{|y|^n (\log \frac{e}{|y|})^\gamma} \right)^{\frac{1}{r}} \notag \\
    &\le  D \la^{-\frac{\gamma -1}{r}} \left( \int_{B_{e^{1-\frac{1}{\la}}} \setminus B_1} \frac{|v|^t \,dy}{|y|^n (\log \frac{e}{|y|})^\ep} \right)^{\frac{1}{t}} 
    \left( \int_{B_e \setminus B_1} \frac{dy}{|y|^n (\log \frac{e}{|y|})^{\frac{\gamma t -r \ep}{t-r}}} \right)^{\frac{1}{r}-\frac{1}{t}} \notag \\ 
    &\le C \la^{\frac{\ep -1}{t} - \frac{\gamma -1}{r}} \left( \int_{B_1 \setminus B_{e^{1-\la}}} \frac{|u|^t \,dx}{|x|^n (\log \frac{e}{|x|})^\ep} \right)^{\frac{1}{t}} \notag \\
    &\le C \la^{(\frac{\alpha -1+p}{p} -\frac{\beta -1}{q})(d-a)} 
    A(u)^d B(u)^{1-d} = C A(u)^a B(u)^{1-a}. 
    \end{align}
From (\ref{lambda 1}) and (\ref{lambda 2}), we have the inequalities (\ref{CCKN ineq}). 

\noindent
{\bf [(II)\,The case: $\frac{\alpha -1+p}{p} < \frac{\beta -1}{q}$]} 
This case can be shown in a similar way to the previous case. Therefore, we omit the proof.

\noindent
{\bf [(III)\,The case: $\frac{\gamma -1}{r} > \frac{\alpha -1+p}{p} = \frac{\beta -1}{q}$]} In this case, we set $\tilde{r}$ and $\tilde{\gamma}$ as follows.
\begin{align*}
    \frac{1}{\tilde{r}} = \frac{a}{p} + \frac{1-a}{q} \left(  < \frac{1}{r} \right) \quad \text{and}\quad  \frac{\tilde{\gamma} -1}{\tilde{r}} = \frac{\alpha -1 + p}{p} = \frac{\beta -1}{q} 
\end{align*}
Then we see that 
\begin{align*}
    0< \frac{\tilde{\gamma} -1}{\tilde{r}} < \frac{\gamma -1}{r} \quad \text{and}\quad \tilde{r} >r.
\end{align*}
By using the H\"older inequality and \S \ref{S a<1 n>1}, we have
\begin{align*}
    \left( \int_{B_1} \frac{|u|^r \,dx}{|x|^n (\log \frac{e}{|x|})^\gamma} \right)^{\frac{1}{r}}
    &\le C \left( \int_{B_1} \frac{|u|^{\tilde{r}} \,dx}{|x|^n (\log \frac{e}{|x|})^{\tilde{\gamma}}} \right)^{\frac{1}{\tilde{r}}}
    \le C A(u)^{a} B(u)^{1-a} 
\end{align*}
Therefore, we complete the proof of Theorem \ref{Thm CCKN}. 
\end{proof}

\begin{remark}\label{Rem scale}
    For original CKN inequalities (\ref{CKN ineq}), it is possible to normalize the norms $A(u)$ and $B(u)$ by using scale invariance with respect to dilation. However, for critical CKN inequalities (\ref{CCKN ineq}), it seems difficult to normalize due to the lack of scale invariance. Therefore, we divide two cases (I)-(i) and (I)-(ii) as above. 
\end{remark}

%
%
\section{Appendix: Proof of Proposition \ref{Prop GN}}\label{S App}

In this section, we give the outline of a proof of the Gagliardo-Nirenberg inequality with mean zero on a domain $A$ with smooth bounded boundary.  

\begin{proof}[Proof of Proposition \ref{Prop GN}]
Thanks to the Poincar\'e inequality with mean zero, it is enough to show 
\begin{align}\label{weak GN}
    \| u \|_{L^{\tilde{r}} (A)} \le C \,\| u \|_{W^{1,p} (A) }^a \,\| u \|_{L^q (A)}^{1-a}
\end{align} 
which is a weaker version of (\ref{GN gene}), for any functions $u \in C^{\infty} (A)$. 
By using the Gagliardo-Nirenberg inequality (\ref{weak GN}) on $\re^n$, see e.g. \cite[p.400, Theorem 12.83]{Leoni}, \cite{OT}, and an extension operator $E$ below, we shall show (\ref{weak GN}).    

\begin{prop}\label{Prop extension}
Let $A \subset \re^n$ be a domain and $\partial A$ be $C^1$ and bounded. Then there exists a linear operator $E: L^1_{\rm loc}(\overline{A}) \to L^1_{\rm loc}(\mathbb{R}^n)$ such that the followings hold:
\begin{itemize}
    \item[\rm (i)] For any $u \in L^{1}_{\rm loc} (\overline{A})$, $Eu=u$ in $A$.
    \item[\rm (ii)] For any $p, q \ge 1$, $E(L^q(A)) \subset L^q(\re^n)$ and $E(W^{1,p}(A)) \subset W^{1,p}(\re^n)$.
    \item[\rm (iii)] For any $p, q \ge 1$, there exist $C_1=C_1(p)>0$ and $C_2 = C_2(q)>0$ such that 
\begin{align*}
\| Eu \|_{W^{1,p}(\re^n)} \le C_1 \,\| u \|_{W^{1,p}(A)}\quad &(\forall u \in W^{1,p}(A)),\\
    \| Eu \|_{L^q(\re^n)} \le C_2 \,\| u \|_{L^q(A)}\quad &(\forall u \in L^q(A)). 
\end{align*}
\end{itemize}
\end{prop}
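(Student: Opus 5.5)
We construct $E$ by the classical local reflection argument, as in Evans's extension theorem, and check that the same operator works simultaneously on $L^q$ for every $q$ and on $W^{1,p}$ for every $p$. The key point is that the construction does not depend on the exponent: it is a fixed linear combination of compositions with bi-Lipschitz maps and multiplications by smooth cutoffs. Each of these operations is bounded on every $L^q$ and every $W^{1,p}$.

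First, we work locally near a boundary point $x^0\in\partial A$. Since $\partial A$ is $C^1$, there are a neighbourhood $W$ and a $C^1$ diffeomorphism $\Phi$ with $C^1$ inverse $\Psi=\Phi^{-1}$. The map $\Phi$ straightens the boundary, sending $W\cap A$ into the upper half space and $W\cap\partial A$ into $\{y_n=0\}$. For $C^1$ data we use the first-order reflection; for general data we use the even reflection
$$\bar v(y',y_n)=\begin{cases} v(y',y_n) & y_n\ge 0,\\ v(y',-y_n) & y_n<0.\end{cases}$$
Even reflection is linear and is defined for any $L^1_{\rm loc}$ function. It satisfies $\|\bar v\|_{L^q}\le 2^{1/q}\|v\|_{L^q}$ and, for $v\in W^{1,p}$ of the half ball, $\bar v\in W^{1,p}$ with $\|\bar v\|_{W^{1,p}}\le 2^{1/p}\|v\|_{W^{1,p}}$. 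The weak derivative across $\{y_n=0\}$ is handled by approximation with functions smooth up to the boundary, or by the absolute-continuity-on-lines characterization. Pulling back by $\Phi$ and pushing forward by $\Psi$ preserves $L^q$ and $W^{1,p}$ with constants depending only on $\|D\Phi\|_\infty$, $\|D\Psi\|_\infty$ and the Jacobians, by the chain rule for Sobolev functions under $C^1$ (bi-Lipschitz) changes of variables.

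Second, we globalize. Since $\partial A$ is compact (bounded and closed), we cover it by finitely many such $W_1,\dots,W_N$, set $W_0\Subset A$, and take a smooth partition of unity $\{\zeta_i\}_{i=0}^N$ subordinate to this cover on a neighbourhood of $\overline{A}$. We then define
$$Eu=\zeta_0 u+\sum_{i=1}^N \overline{(\zeta_i u)}^{(i)},$$
where $\overline{\,\cdot\,}^{(i)}$ denotes the local reflection in chart $i$, extended by zero outside $W_i$. We may multiply by a fixed cutoff equal to $1$ near $\overline A$ to make the support compact. Properties (i) and linearity are immediate. Property (ii) and the two bounds in (iii) follow by summing the local estimates, with $C_1(p)$ and $C_2(q)$ depending on $p$, $q$, $N$, $\|\nabla\zeta_i\|_\infty$ and the charts. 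Here the product rule gives $\|\zeta_i u\|_{W^{1,p}}\le C\|u\|_{W^{1,p}}$.

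The main obstacle is justifying that even reflection preserves $W^{1,p}$ for arbitrary $W^{1,p}$ functions, not merely smooth ones, together with the corresponding chain rule under $C^1$ diffeomorphisms. I would handle both by density of $C^\infty(\overline{A})$ in $W^{1,p}(A)$, which holds for $C^1$ boundaries when $p<\infty$, and by passing to the limit in the uniform estimates. The $L^q$ estimate is a direct change of variables and needs no density.
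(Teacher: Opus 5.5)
Your proposal is correct, but it takes a different route from the paper.

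\textbf{What the paper does.} The paper gives no construction at all. In the remark after the proposition it cites Stein's universal extension operator (Sections VI.3--VI.4), Leoni's Chapter 13 and Brezis--Mironescu. It then asserts that a careful review of those proofs shows that a single operator is defined on $L^1_{\rm loc}$ and satisfies both bounds in (iii).

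\textbf{What you do.} You build the operator explicitly: even reflection in $C^1$ flattening charts, glued with a partition of unity. This is the standard textbook proof of the $W^{1,p}$ extension theorem for $C^1$ domains with bounded boundary. You also observe that every ingredient is independent of the exponent: composition with bi-Lipschitz maps, multiplication by fixed smooth cutoffs, and even reflection.

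\textbf{Comparison.} Your route gives a self-contained, checkable argument in which the two features the paper needs are transparent. The operator is local, so it is clearly defined on $L^1_{\rm loc}(\overline{A})$. It is visibly the same operator on every $L^q$ and every $W^{1,p}$. Even reflection suffices because only first derivatives occur. The cited Stein operator buys more generality: Lipschitz boundaries and simultaneous boundedness on all $W^{k,p}$. The price is a more delicate construction (regularized distance and an averaging kernel with vanishing moments), which the paper leaves unverified. For the application to the annulus $A_1$, none of that generality is needed.

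\textbf{Details to tighten.}

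Only $\partial A$ is assumed bounded, so $A$ may be unbounded. Rather than a set $W_0$ compactly contained in $A$, take $\zeta_1,\dots,\zeta_N$ with $\sum_{i\ge 1}\zeta_i=1$ near $\partial A$, and set $\zeta_0=1-\sum_{i\ge 1}\zeta_i$. This $\zeta_0$ vanishes near $\partial A$ and has bounded gradient. Drop the final compact-support cutoff; it is not needed.

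Use one reflection (the even one) for all data. Choosing the first-order reflection for $C^1$ data and the even one otherwise would not define a single linear operator.

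Choose the charts so that $\Phi(W_i)$ is symmetric under $y_n\mapsto -y_n$. Then the reflected piece is compactly supported in $W_i$, and extension by zero preserves $W^{1,p}$.

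Finally, the even-reflection lemma can be proved directly by testing against $C_c^\infty$ functions. This avoids the density argument entirely.
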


\noindent
Let $E$ be the extension operator as above. By using GN inequality on $\re^n$, we have
\begin{align*}
\| u \|_{L^{\tilde{r}}(A)}
&\le \| Eu \|_{L^{\tilde{r}}(\mathbb{R}^n)} 
\le C \,\| Eu \|_{W^{1,p}(\mathbb{R}^n)}^a \| Eu \|_{L^q(\mathbb{R}^n)}^{1-a} 
\le C \,\|u \|_{W^{1,p}(A)}^a \|u \|_{L^q(A)}^{1-a} 
\end{align*} 
Therefore, we get (\ref{weak GN}).
\end{proof}

\begin{remark}
    Concerning to Proposition \ref{Prop extension}, it is often that the domain of extension operator is Sobolev space. However, if we review a proof of extension operator carefully, we see that the domain of extension operator $E$ can be $L^1_{\rm loc}(A)$ and for the same $E$, we get two estimates in Proposition \ref{Prop extension} {\rm (iii)}, see e.g. \cite[Section VI.3 and VI.4]{Stein}, \cite[Chapter 13]{Leoni}, \cite[Proposition 2.3]{BM}. 
\end{remark}


\section*{Acknowledgment}
The first author (M.S.) was supported by JSPS KAKENHI Early-Career Scientists, No.23K13001.
The second author (Y.S.) was supported by Science Committee of the Ministry of Science and Higher Education of the Republic of Kazakhstan (Grant No. AP23490970). 
Part of this work was conducted while the first and third authors were visiting SDU University in 2025. The authors thank SDU University for their hospitality. 
Also, this work was partly supported by Osaka Central University Advanced Mathematical Institute (MEXT Joint Usage/Research Center on Mathematics and Theoretical Physics). 



\begin{thebibliography}{99}


\bibitem{AHN}
Ando, H., Horiuchi, T., Nakai, E., 
{\it On the critical Caffarelli-Kohn-Nirenberg type inequalities involving super-logarithms}, 
Hiroshima Math. J. 55 (2025), no. 1, 65-87.


\bibitem{Adams}
Adams, D. R., {\it Weighted nonlinear potential theory}, 
\newblock Trans. Amer. Math. Soc. \textbf{297} (1986), no. 1, 73--94.


\bibitem{AS}
Adimurthi, Sandeep, K., {\it Existence and non-existence of the first eigenvalue of the perturbed Hardy-Sobolev operator}, 
\newblock Proc. Roy. Soc. Edinburgh Sect. A \textbf{132} (2002), No.5, 1021--1043.





\bibitem{BG}
Baras, P., Goldstein, J., 
{\it The heat equation with a singular potential}, 
Trans. Amer. Math. Soc. 284 (1984), no. 1, 121-139.


\bibitem{BM}
Brezis, H., Mironescu, P., {\it Gagliardo-Nirenberg inequalities and non-inequalities: the full story}, Ann. Inst. H. Poincar\'e C Anal. Non Lin\'eaire 35 (2018), no. 5, 1355-1376.

\bibitem{BV}
Brezis, H., V\'{a}zquez, J. L., {\it Blow-up solutions of some nonlinear elliptic problems}, 
\newblock Rev. Mat. Univ. Complut. Madrid 10 (1997), No. 2, 443-469.

\bibitem{CKN}
Caffarelli, L. A., Kohn, R., Nirenberg, L., {\it First order interpolation inequalities with weights}, 
\newblock Composito Math. \textbf{53} (1984), 259--275.

\bibitem{CKN(PDE)}
Caffarelli, L. A., Kohn, R., Nirenberg, L., {\it Partial regularity of suitable weak solutions of the Navier-Stokes equations}, 
\newblock Comm. Pure Appl. Math, \textbf{35} (1982), 771--831.

\bibitem{Evans}
Evans, L. C., {\it Partial differential equations. Second edition}, Graduate Studies in Mathematics, 19. American Mathematical Society, Providence, RI, 2010. xxii+749 pp.

\bibitem{G}
Gagliardo, E., {\it Ulteriori propritet\`a di alcune classi di funzioni in pi\`u variabili},
Ricerche di Mat. Napoli 8 (1959) 24-51.



\bibitem{H}
Horiuchi, T., {\it On general Caffarelli-Kohn-Nirenberg type inequalities involving non-doubling weights}, Sci. Math. Jpn. (in Editions Electronica), e-2022-10, 16 pages. 

\bibitem{H p=1}
Horiuchi, T., {\it On general Caffarelli-Kohn-Nirenberg type inequalities involving non-doubling weights in the case of $p=1$}, arXiv:2512.21492. 

\bibitem{HK}
Horiuchi, T., Kumlin, P., {\it On the Caffarelli-Kohn-Nirenberg-type inequalities involving critical and supercritical weights}, 
\newblock Kyoto J. Math. \textbf{52} (2012), no. 4, 661--742. 

\bibitem{II}
Ioku, N., Ishiwata, M., {\it A Scale Invariant Form of a Critical Hardy Inequality}, 
\newblock Int. Math. Res. Not. IMRN (2015), no. 18, 8830--8846.

\bibitem{Leoni}
Leoni, G. {\it A first course in Sobolev spaces. Second edition.}, Grad. Stud. Math., 181 American Mathematical Society, Providence, RI, 2017. xxii+734 pp.

\bibitem{Leray}
Leray, J., {\it Etude de diverses equations integrales non lineaires et de quelques problemes que pose l'hydrodynamique. (French)}, (1933), 82 pp.

\bibitem{MOW(Tohoku)}
Machihara, S., Ozawa, T., Wadade, H., {\it Hardy type inequalities on balls}, Tohoku Math. J. (2) 65 (2013), no. 3, 321-330.


\bibitem{Nash}
Nash, J., {\it Continuity of solutions of parabolic and elliptic equations}, Amer. J.Math. 80 (1958),
931-954.

\bibitem{N}
Nirenberg, L., 
{\it On elliptic partial differential equations}, 
Ann. Scuola Norm. Sup. Pisa Cl. Sci. (3) 13 (1959), 115-162.

\bibitem{OT}
Ozawa, T. Takeuchi, T., {\it A new proof of the Gagliardo-Nirenberg and Sobolev inequalities: heat semigroup approach}, Proc. Amer. Math. Soc. Ser. B 11 (2024), 371-377.


\bibitem{S(JDE)}
Sano, M., {\it Extremal functions of generalized critical Hardy inequalities}, 
\newblock J. Differential Equations \textbf{267} (2019), no. 4, 2594--2615.

\bibitem{S(MJM)}
Sano, M., {\it Improvements and generalizations of two Hardy type inequalities and their applications to the Rellich type inequalities}, Milan J. Math. 90 (2022), no. 2, 647-678.


\bibitem{Sano-TF(JGA)}
M. Sano, and F. Takahashi, 
{\em On eigenvalue problems involving the critical Hardy potential and Sobolev type inequalities with logarithmic weights in two dimensions},
\newblock J. Geometric Anal., {\bf 34}, no.4, Paper No. 112 (32 pages), (2024) 


\bibitem{ST(HT)}
Sano, M., Takahashi, F., {\it The critical Hardy inequality on the half-space via harmonic transplantation}, 
\newblock Calc. Var. Partial Differential Equations \textbf{61} (2022), no. 4, Paper No. 158, 33 pp.

\bibitem{SSW}
Smets, D., Su, J., Willem, M., {\it Non-radial ground states for the H\'enon equation}, Commun. Contemp. Math. 4 (2002), no. 3, 467-480.


\bibitem{Stein}
Stein, E.M., {\it Singular Integrals and Differentiability Properties of Functions}, Princeton Math. Ser., vol. 30, Princeton University Press, Princeton, NJ, 1970.


\bibitem{Struwe}
Struwe, M., {\it Variational methods. Applications to nonlinear partial differential equations and Hamiltonian systems. Fourth edition}, Ergebnisse der Mathematik und ihrer Grenzgebiete. 3. Folge. A Series of Modern Surveys in Mathematics [Results in Mathematics and Related Areas. 3rd Series. A Series of Modern Surveys in Mathematics], 34. Springer-Verlag, Berlin, (2008).



\end{thebibliography}
\end{document}